\newcommand{\R}{\mathbb{R}}
\newcommand{\N}{\mathbb{N}}
\newcommand{\Teich}{Teich\-m\"{u}ller~}
\theoremstyle{plain}
\newtheorem{Lem}{Lemma}[section]
\newtheorem{Cor}{Corollary}[section]
\newtheorem{Prop}{Proposition}[section]
\newtheorem{Thm}{Theorem}[section]
\newtheorem*{Thm*}{Theorem}
\newtheorem*{Prop*}{Proposition}
\newtheorem*{Cor*}{Corollary}
\newtheorem*{Lem*}{Lemma}
\theoremstyle{definition}
\newtheorem{Def}{Definition}[section]
\DeclareMathOperator{\Cat}{Cat}
\begin{document}
\title{Rigidity of flat surfaces under the boundary measure}
\author{Klaus Dankwart}
\thanks{ AMS subject classification: 37E35, 57M50, 30F30, 30F60}
\begin{abstract}
Consider a closed marked flat surface $S$ of genus $g\geq 2$ and area 1 and its universal covering $\tilde{S}$. 
We show that the  measure class of the Hausdorff measure of the Gromov boundary of $\tilde{S}$ uniquely determines $S$.  
\end{abstract}
\maketitle
\section{Introduction}
For a fixed closed topological surface $X$ of genus $g\geq 2$ a \textit{marked surface} $(S,f)$  is an isotopy class of homeomorphisms $f:X \to S$.
If  $S$ is endowed with a length metric the \textit{marked length spectrum} of $(S,f)$ is then the length  $f(\alpha)$ for all free homotopy class of closed curves  $\alpha$ on $X$. Suppose  $(S,f)$ and $(T,g)$ are marked Riemannian surfaces of variable negative curvature.  \cite{Otal1990} and \cite{Croke1990} independently showed that $f\circ g^{ -1}$ is isotopic to  an isometry  if and only if  the marked length spectra of $(S,f)$ and $(T,g)$ are equal.\\
 By the work of \cite{Hamenstadt1992} $f\circ g^{ -1}$  is isotopic to an isometry if and only if the push forward of the Bowen Margulis measure on $T^1S$ is in the measure class of the Bowen Margulis measure on $T^1T$. Denote by $G(S):= \partial \tilde{S} \times\partial \tilde{S} -\triangle $ the set of ordered pairs of distinct boundary points of the ideal boundary of the universal covering $\pi:\tilde{S} \to S $. A locally finite Borel measure on $T^1S$ that is invariant under the geodesic flow is naturally one to one with a current i.e. a locally finite Borel measure on $G(S)$ that is invariant under the group of Deck-transformations. So, the statement can be naturally phrased in terms of geodesic currents. \\
In this note we investigate in which way these results translate to marked flat surfaces  i.e marked  surfaces which are endowed with a locally $\Cat$(0), singular flat metric which comes from a half-translation structure, see Section \ref{sectflatmetric} for details.  Flat surfaces are central in the study of the \Teich space of conformal structures, see i.e. \cite{Abiko1980, HubbaM1979}.
Define the equivalence relation on the set of all marked flat surfaces $(S,f)\sim (T,g)$ if $g\circ f^{-1}$ is isotopic to an isometry. The \textit{\Teich space of flat surfaces} $\mathcal{T} (X)$ is then the set of equivalence classes of marked flat surfaces. $\mathcal{T} (X)$ was already studied, see i.e \cite{Smillie2008, Veech1997}.\\ 
 In a more general setting \cite{Troyanov2007,Troyanov2006,Troyanov1986} studied the deformation space of singular cone metrics. \\
\cite[Theorem 2]{Duchin2010}  showed the analogon to  Otals and Crokes theorem that the marked length spectrum  uniquely determines a point in \Teich space.\\
We are interested in which way  Hamenst\"{a}dts result translates to the \Teich space of flat surfaces. Since the flat metric is not smooth we consider measures on the ideal boundary of the universal covering $\pi:\tilde{X} \to X$.\\
  A point  $(S,f) \in \mathcal{T} (X)$  lifts to an isotopy class of homeomorphisms of the universal coverings  $f:\tilde{X} \to \tilde{S}$ that are invariant under the group of Deck transformations.  As $\tilde{S}$ endowed with the lifted flat metric is a geodesic  Gromov $\delta$-hyperbolic space, there exists a family of Gromov metrics $d_{p,\infty},~p \in \tilde{S}$ on the ideal boundary $\partial\tilde{S}\sim S^1$ and so a family of positive finite Hausdorff measures or Patterson-Sullivan measures $\mu_{\tilde{S},p}$ in same measure class $\mu_{\tilde{S}}$.  Observe that for any two marked flat surfaces  $(S,f),(T,g)$ each representative in the isotopy class $g\circ f^{-1}:\tilde{S} \to \tilde{T}$ is a quasi-isometry which extends to a homeomorphism between the visual boundaries  $g\circ f^{-1}:\partial\tilde{S} \to \partial\tilde{T}$ 
\begin{Thm} \label{Thmmain}
Two points $(S,f),(T,g)~ \in \mathcal{T} (X) $  determine,  up to rescaling,  the same point in the \Teich space of flat surfaces if and only if the push-forward of the measure class $(g\circ f^{-1})_*\mu_{\tilde{S}}$ equals $\mu_{\tilde{T}}$. 
\end{Thm}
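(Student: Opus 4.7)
The ``only if'' direction is routine: if $g\circ f^{-1}$ is isotopic to a homothety of ratio $\lambda$, then it lifts to a homothety $\tilde S\to\tilde T$ that rescales every Gromov metric $d_{p,\infty}$ on the boundary by $\lambda$ and consequently pushes the Hausdorff measure class onto the Hausdorff measure class. The plan for the converse is to recover from $\mu_{\tilde S}$ the translation length function $\gamma\mapsto\ell_S(\gamma)$ on the deck group up to a global positive constant, and then invoke the marked length spectrum rigidity of Duchin--Leininger--Rafi \cite{Duchin2010}.

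The measures $\mu_{\tilde S,p}$ form a conformal density: by equivariance $\gamma_*\mu_{\tilde S,p}=\mu_{\tilde S,\gamma p}$ for each deck transformation $\gamma$, so the family behaves as a Patterson--Sullivan density of some critical exponent $\delta_S>0$ equal to the Hausdorff dimension of $(\partial\tilde S,d_{p,\infty})$. A shadow lemma adapted to the Gromov hyperbolic space $\tilde S$ should then show that the Radon--Nikodym derivative $d\mu_{\tilde S,\gamma p}/d\mu_{\tilde S,p}$ evaluated at the attracting fixed point $\gamma^+\in\partial\tilde S$ is comparable to $\exp(\delta_S\,\ell_S(\gamma))$, so that
\[
\lim_{n\to\infty}\frac{1}{n}\log\frac{d\mu_{\tilde S,\gamma^n p}}{d\mu_{\tilde S,p}}(\gamma^+)=\delta_S\,\ell_S(\gamma)
\]
is intrinsic to the measure class $\mu_{\tilde S}$.

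Now assume the boundary homeomorphism $\phi=g\circ f^{-1}$ satisfies $\phi_*\mu_{\tilde S}=\mu_{\tilde T}$ as measure classes. Because the marking intertwines the two deck actions, $\phi$ maps the attracting fixed point of $\gamma$ on $\partial\tilde S$ to its counterpart on $\partial\tilde T$ and intertwines the Radon--Nikodym cocycles up to null sets. Evaluating the above formula on both sides yields $\delta_S\,\ell_S(\gamma)=\delta_T\,\ell_T(\gamma)$ for every $\gamma$, hence $\ell_T=(\delta_S/\delta_T)\,\ell_S$. Applying \cite[Theorem 2]{Duchin2010} then forces $(S,f)$ and $(T,g)$ to agree in $\mathcal{T}(X)$ up to the scale factor $\delta_S/\delta_T$.

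The main obstacle is the analogue of the Sullivan shadow lemma in the singular flat setting. The universal cover is Gromov hyperbolic but not locally $\Cat(-\kappa)$; negative curvature is concentrated at the cone points, through which geodesics can branch. One therefore has to replace Busemann functions by horofunctions on the Gromov bordification and verify that small $d_{p,\infty}$-balls around $\gamma^+$ are comparable to shadows of balls of definite radius around $\gamma^n p$, so that the comparison $d\mu_{\tilde S,\gamma p}/d\mu_{\tilde S,p}(\gamma^+)\asymp e^{\delta_S\,\ell_S(\gamma)}$ is sharp enough to be transported by the merely quasi-conformal boundary homeomorphism $\phi$. Verifying independence of these invariants from the representative in the measure class, and from the choice of basepoint $p$, is the bulk of the technical work.
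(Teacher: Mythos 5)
Your overall skeleton --- normalize the surfaces, recover the marked length spectrum from the boundary measure class, then quote length-spectrum rigidity (Theorem \ref{ThmLengthSpectra}) --- is the same as the paper's. The gap is in the step where you extract $\ell_S(\gamma)$ from the measure class: you propose to evaluate the Radon--Nikodym cocycle, equivalently to compare measures of small balls and shadows, at the attracting fixed point $\gamma^+$ of a deck transformation $\gamma$. The shadow estimates do give $\mu_{\tilde S,p}(sh_p(B(\gamma^np,r)))\asymp e^{-\delta_S n\ell_S(\gamma)}$ for each surface separately, but to conclude $\delta_S\,\ell_S(\gamma)=\delta_T\,\ell_T(\gamma)$ you must also know that the $\mu_{\tilde T}$-measure of the image of such a shadow is comparable to its $\mu_{\tilde S}$-measure, uniformly in $n$. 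The hypothesis only says the two measure classes coincide; the derivative $h=d(\phi_*^{-1}\mu_{\tilde T})/d\mu_{\tilde S}$ is a measurable function defined up to null sets, and the comparison of shadow measures along $[p,\eta]$ is available only at Lebesgue points $\eta$ of $h$ with $0<h(\eta)<\infty$, i.e.\ only almost everywhere. The fixed points of deck transformations form a countable, hence $\mu_{\tilde S}$-null, set, so nothing prevents every $\gamma^+$ from lying in the exceptional set. Unlike the $\Cat(-1)$ Patterson--Sullivan situation, here the density is only quasi-conformal and there is no continuous canonical representative $e^{-\delta B_\eta(p,q)}$ of the cocycle to fall back on, so ``evaluation at $\gamma^+$'' is not an invariant of the measure class. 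This is exactly the point your last paragraph defers to ``technical work'', and it is where the argument breaks.

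The paper avoids the axis of $\gamma$ altogether. Proposition \ref{Propnodist} uses the Vitali covering by shadows and the Lebesgue differentiation theorem to show that for \emph{almost every} $\eta$ the marking change distorts distances along $[p,\eta]$ by a bounded additive amount; Proposition \ref{Propgengeod} (a genericity statement special to flat surfaces, where geodesics branch at cone points) shows that almost every such ray literally contains a lift of the singular geodesic representative of $\alpha^n$ as a subsegment, for every $\alpha$ and every $n$. Reading the length of $\alpha^n$ off such a generic ray and letting $n\to\infty$ kills the additive error and yields $l_S(\alpha)=l_T(f(\alpha))$ (Proposition \ref{PropPSmeasdetlength}). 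To salvage your route you would need a statement that generic rays fellow-travel translates of the axis of each $\gamma$ for arbitrarily long times --- which is essentially the same genericity input in disguise.
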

The idea is the following. Recall that the marked length spectrum uniquely determines a point in the \Teich space of flat surfaces.
Suppose that $(S,f),(T,g)$ are marked flat surfaces of \textit{volume entropy} one that the push-forward of the measure class $(g\circ f^{-1})_*\mu_{\tilde{S}}$ equals $\mu_{\tilde{T}}$. Assume that there is a free homotopy class $\alpha$  in $S$ whose length is smaller than of its image under the marking change.\\ 
 Fix a representative $h$  in the isotopy class $g\circ f^{-1}$ that is invariant under the group of Deck transformations and fix a base point $p\in \tilde{S}$  in the universal covering of $S$. \\
Following \cite{dankwart2011typical}, one can remove a zero set of boundary points $\tilde{A} \subset \partial \tilde{S}$ so that the set of all rays emanating from $p$ which converges to $\partial \tilde{S}-\tilde{A}$ is of the following structure. 
\begin{itemize}
 \item It is a  tree whose vertices is the set of singularities and the base point $p$. At each vertex there are countably many edges attached.  
\item Let $c$ be a, possibly self intersecting, geodesic segment on the base surface $S$ that connects  singularities. Then each ray in the described tree on the universal covering contains infinitely often a lift of $c$ as a subgeodesic segment.  
\end{itemize}
It is well-known that for each multiple $\alpha^n$ of $\alpha$ there is a geodesic representative  of the form as $c$. \\
We make use of the following statement. Suppose that there is a parametrized geodesic ray $r$ on $\tilde{S}$  starting at $p$ with the following properties.
\begin{enumerate}[1)]
\item $r$  is contained in the above tree. 
 \item The length difference for each subgeodesic segment of $r$ under $h$ is bounded, namely  
 $|d(r(s),r(t))- d(h(r(s)),h(r(t)))|<C,\forall s,t>0$ .   
\end{enumerate}
 Then the length of the free homotopy class $\alpha$ on $(S,f)$ is not shorter than on  $(T,g)$.\\
 The proof is as follows.  
Denote by $\alpha^n$ the free homotopy class of a  multiple of $\alpha$.  Let $c$ be a subgeodesic segment  of $ r$ that is a lift of a  geodesic representative of $\alpha^n$. Observe that the length of $c$ is the length of $\alpha^n$. The minimal length in the homotopy class with fixed endpoints of the image of $c$ under the marking change $h$ gives a lower bound of the length of the image of the free homotopy $\alpha^n$ under the marking change.  So the length of  $\alpha^n$ in $S$ is bounded from below by the length of the image of  $\alpha^n$ under the marking change  minus the constant $C$ of the second condition. But since the length of a multiple $\alpha^n$ is $n$ times the length of   $\alpha$ this leads to the statement.\\
So, to show the Theorem we have to find a geodesic ray with the properties described above. One shows
 that the following condition is equivalent to the second condition: \\
\textbf{Condition 2*}: The ray $r$ satisfies $|d(p,r(t))- d(h(p),h(r(t)))|<C$ for all $t$ larger than some some threshold $t_0$.\\    
The set of boundary points $\tilde{A}$ is of measure zero. So it suffices to show that the set of boundary points to which converge a ray that satisfies Condition 2* are of full measure. \\   
This can be done by using \textit {shadows} of balls of fixed radius with respect to the base point $p$. Observe first that similarly to \cite{dankwart2011typical} one can show that
\begin{itemize}
 \item The $\mu_{\tilde{S}}$-measure of a shadow of a ball with center $x$ equals, up to a multiplicative constant, $exp(-d(p,x))$. 
\item The $\mu_{\tilde{T}}$-measure  of the image of the shadow under  $h$ equals $exp(-d(h(p),h(x))$ up to a multiplicative constant.
\end{itemize}
 In both cases one makes use of the fact that the volume entropy of $S$ and $T$ is one.\\
For a ray $r$ consider the shadows of open balls of fixed radius which are centered on the ray at some point $r(t)$. 
So, if for some $t_0>0$ and for all $t>t_0$ the quotient of the measure of the shadow centered at $r(t)$ for $\mu_{\tilde{S}}$ and $h^{-1}_*\mu_{\tilde{T}}$ is bounded by a constant $C_1(r)>0$  then the rays satisfies condition 2*.\\ 
But, by  the Lebesgue Differentiation Theorem this condition is satisfied for all rays that converge to boundary points outside a zero set as the set of shadows forms a Vitali covering, compare Definition \ref{DefVITALI}. \\
The paper is organized as follows. In Section \ref{sectprel}  we recall some abstract facts in metric measure theory. Then we state the main results about $\delta$-hyperbolic spaces and flat surfaces needed later on. In Section \ref{sectmainthem} we show a  technical result which, together with the previous material, shows the main theorem.\\
\textbf{Acknowledgement} This question was raised by Mrs. Hamenst\"{a}dt. I am very grateful for her patience, her support and many helpful discussions.  
\section{Preliminaries about Gromov hyperbolic spaces and flat surfaces}\label{sectprel}
\subsection{Some metric measure theory}
We recall  some standard facts about metric measure theory and refer to \cite{ Federer1969}. \\
Suppose $(X,d)$ is a proper metric space. For a subset $U$,  the diameter $diam(U)$ is then the supremum of distance of points in $U$.
We recall the definition of the \textit{Hausdorff measure} and \textit{Hausdorff dimension}.\\
Let $a \geq 0$ be some number. For any subset $U$ of $X$, $\mu_{\epsilon}^a(U)$ is then the infimum of $\sum\limits_{i\in \N} diam(U_i)^a$ for all countable coverings $U_i,~i \in \N$ of $U$, so that $diam (U_i)\leq \epsilon$. The \textit{Hausdorff measure} of dimension $a\geq 0$ is then $\mu^a:=\lim\limits_{ \epsilon \rightarrow 0} \mu_{\epsilon}^a$ which might be constant zero or locally infinite.\\   
The \textit{Hausdorff dimension} of  $(X,d)$ is then the infimum of all $a$ so that $\mu^a(X)=0$.\\
If the Hausdorff dimension is finite, the \textit{Hausdorff measure} $\mu$ is then the Hausdorff measure whose dimension is the Hausdorff dimension.\\
Next we recall the notion of doubling measures and Vitali coverings. Vitali coverings can also be defined more generally. That special definition is consistent to the general one, compare \cite[Theorem 2.8.17]{ Federer1969} where the function $\delta$ is here defined as the  diameter.\\ 
Denote by $ B(x,r)$ a metric ball of radius $r$ and and center $x$. A locally finite Borel measure  $\nu$ on $(X,d)$ is \textit{doubling} if there is a constant $C>0$ so that 
$$\nu(B(x,2r))\leq C \nu(B(x,r))~ \forall x,r.$$ 
\begin{Def}\label{DefVITALI}
A collection $\mathcal{V}$ of closed subsets of metric space $(X,d)$  with a doubling measure $\nu$ is a \textit{Vitali covering} if:
\begin{itemize}
\item There is a constant $C_{\mathcal{V}}>0$ so that each $U\in \mathcal{V}$ is contained in a metric ball $B$ with $\nu(U)\geq C_{\mathcal{V}} \nu(B)$
\item For each $x \in X$ there is a sequence of neighborhoods $U_i \in \mathcal{V},~i\in \N$ of $x$  so that $diam(U_i)$ tends to zero. 
\end{itemize}
 \end{Def}
Suppose  $f: X \to \R$ is a measurable function. A point $x \in X$ is a \textit{Lebesgue point} with respect to $\mathcal{V}$ and  $f$ if and only if for each sequence of neighborhoods $U_i \in \mathcal{V}$ of $x$  with $\lim\limits_{i\rightarrow \infty}diam(U_i)=0$  
it follows that
$$\lim\limits_{i\rightarrow \infty}\frac{1}{\nu(U_i)}\int_{U_i}fd\nu=f(x).$$
The main application for Vitali sets is the Lebesgue differentiation Theorem, compare \cite[Theorem 2.9.8]{Federer1969}  
\begin{Thm}
The set of Lebesgue points in $X$ is of full measure.
\end{Thm}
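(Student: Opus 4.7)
The plan is to follow the standard route to the Lebesgue differentiation theorem: pass through a weak-type $(1,1)$ maximal inequality and then approximate by continuous functions. Since the conclusion is local and $(X,d)$ is proper, I can reduce to studying $f\cdot \mathbf{1}_K$ for an arbitrary compact $K\subset X$, and in particular I can assume $f\in L^1(\nu)$ without loss of generality.

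First I would introduce the Hardy--Littlewood type maximal operator adapted to $\mathcal{V}$,
\[
Mf(x)\;=\;\sup\Bigl\{\tfrac{1}{\nu(U)}\textstyle\int_U|f|\,d\nu : U\in\mathcal{V},\ x\in U,\ \diam(U)\le 1\Bigr\},
\]
and prove the weak-type estimate
\[
\nu\bigl(\{x\in X : Mf(x)>\lambda\}\bigr)\;\le\;\frac{C_0}{\lambda}\,\|f\|_{L^1(\nu)}
\]
for some constant $C_0$ depending only on the doubling constant $C$ and on the constant $C_{\mathcal{V}}$ from Definition \ref{DefVITALI}. The key input is a Vitali-type selection lemma: given the family $\{U_\alpha\}$ of sets in $\mathcal{V}$ witnessing that $Mf(x)>\lambda$, each $U_\alpha$ is contained in a ball $B_\alpha$ with $\nu(U_\alpha)\ge C_{\mathcal{V}}\nu(B_\alpha)$; by the standard $5r$-covering (or Besicovitch) argument on the balls $B_\alpha$ one extracts a countable disjoint subfamily $\{B_{\alpha_i}\}$ whose controlled enlargements still cover the level set, and the doubling property converts this back to a bound in terms of $\sum_i \nu(U_{\alpha_i})\le \lambda^{-1}\sum_i\int_{U_{\alpha_i}}|f|\,d\nu\le \lambda^{-1}\|f\|_1$.

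Next, given $\epsilon>0$, I would approximate $f=g+h$ in $L^1(\nu)$ with $g$ continuous of compact support and $\|h\|_{L^1}<\epsilon$. For $g$, continuity together with $\diam(U_i)\to 0$ makes $\frac{1}{\nu(U_i)}\int_{U_i}g\,d\nu\to g(x)$ at \emph{every} point, so the exceptional set $E_\epsilon:=\{x:\limsup_i |\frac{1}{\nu(U_i)}\int_{U_i}f\,d\nu-f(x)|>\epsilon\}$ is contained in $\{Mh>\epsilon/2\}\cup\{|h|>\epsilon/2\}$, whence $\nu(E_\epsilon)\le 2(C_0+1)\epsilon^{-1}\|h\|_{L^1}\le 2(C_0+1)$. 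Letting $\|h\|_{L^1}\to 0$ along a suitable sequence of approximants yields $\nu(E_\epsilon)=0$, and taking a countable intersection over $\epsilon=1/n$ gives the statement.

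The main technical obstacle is the first step: verifying that a Vitali covering lemma holds in this metric setup. In a general metric space one cannot directly extract disjoint subfamilies from the $U_\alpha$ themselves, and the reason Definition \ref{DefVITALI} requires each $U$ to be comparable in $\nu$-measure to a containing ball is precisely so that the covering lemma can be executed at the level of balls and then transferred back. Once this is in place (and here one is essentially quoting the general covering framework of Federer, Theorem~2.8.17), the remainder is the routine maximal-function argument sketched above.
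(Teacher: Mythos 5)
Your argument is correct in outline, but note that the paper does not actually prove this statement: it simply cites Federer, Theorem 2.9.8, and the definition of Vitali covering in Definition \ref{DefVITALI} is tailored precisely so that Federer's general differentiation theory applies. Your route is the classical harmonic-analysis one (weak-type $(1,1)$ bound for a maximal operator adapted to $\mathcal{V}$, then density of $C_c$ in $L^1(\nu)$), whereas Federer's cited argument applies the Vitali covering theorem directly to the sets where the upper and lower averages of $f$ deviate from $f(x)$ by more than a fixed amount, with no maximal function; both are standard and both ultimately rest on the same covering lemma plus doubling. One point in your sketch deserves more care: Definition \ref{DefVITALI} only guarantees that each $U$ lies in \emph{some} ball $B$ with $\nu(U)\geq C_{\mathcal{V}}\nu(B)$, with no control on the radius of $B$, and the $5r$-selection needs uniformly bounded radii. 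This is repaired by recentering: if $U\subset B(z,R)$ with $\nu(U)\geq C_{\mathcal{V}}\nu(B(z,R))$ and $x\in U$, then $\diam(U)\leq 2R$ and $B(x,\diam(U))\subset B(z,3R)$, so doubling gives $\nu\bigl(B(x,\diam(U))\bigr)\leq C^{2}C_{\mathcal{V}}^{-1}\nu(U)$; hence one may assume the containing balls are centered in $U$ with radius $\diam(U)\leq 1$, after which the greedy $5r$-argument and the disjointness of the selected $U_{\alpha_i}$ (which sit inside the disjoint selected balls) go through as you describe. Also, avoid leaning on Besicovitch, which fails in general metric spaces; the $5r$-lemma is the right tool here. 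With these routine repairs, and the usual exhaustion by balls to justify the reduction to $f\in L^{1}(\nu)$, your proof is complete.
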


\subsection{Gromov hyperbolic spaces and their boundary}\label{sectgrhyp}
We recall the standard facts about proper $\delta$-hyperbolic spaces, compare \cite[ Chapter III] {BridsH1999}.\\ 
\textbf{Convention:} Any metric space is assumed to be complete, proper and geodesic. Every geodesic segment can be extended to a geodesic line.\\
A metric space $\tilde{S}$ is $\delta$-\textit{hyperbolic} if every geodesic triangle in $\tilde{S}$ with sides $a, b, c$ is $\delta$-\textit{slim}: The side $a$ is contained in the $\delta$-neighborhood of $b \cup c$.\\
A $L$-\textit{quasi-isometry} is a mapping  $g:\tilde{S} \to \tilde{T}$ so that the distance of each point in $\tilde{S}$ to  $g(\tilde{T})$ is uniformly bounded  and so that 
$$(1+L)d_{\tilde{T}}(x,y)+L\geq d_{\tilde{S}}(g(x),g(y))\geq (1+L)^{-1}d_{\tilde{T}}(x,y)-L,~ \forall x,y.$$
 A $L$-\textit{quasi-geodesic} is a mapping  $g$ of a line segment $I$ to $\tilde{S}$ so that 
 $$(1+L)|s-t|+L\geq d(g(s),g(t))\geq (1+L)^{-1}|s-t|-L,~ \forall s,t.$$
$\tilde{S}$ admits a boundary which is defined as follows. Fix a point $p\in \tilde{S}$ and for two points $x,y\in \tilde{S}$ we define the \textit{Gromov product} $(x,y)_p :=\frac{1}{2}(d(x,p)+d(y,p)-d(x,y))$. We call a sequence $x_i$ \textit{admissible} if $(x_i,x_j)_p\rightarrow \infty$. We define two admissible sequences
 $x_i,y_i\subset \tilde{S}$ to be equivalent if $(x_i,y_i)_p \rightarrow \infty$. Since $\tilde{S}$ is hyperbolic, this defines an equivalence relation. The boundary $\partial \tilde{S}$ of $\tilde{S}$ is then the set of equivalence classes.\\ 
The \textit{Gromov product on the boundary} is then 
$$(\eta,\zeta)_p=\sup\{\liminf\limits_{i,j}(x_i,y_j)_p~|~\{x_i\} \in \eta ,~\{y_j\} \in \zeta\} $$
\begin{Prop} \label{Prpgrmetric}
 Let $\tilde{S}$ be a $\delta$-hyperbolic space and let $\delta_{\inf}$ be the infimum of all Gromov hyperbolic constants. 
Moreover, let $\xi$ be defined by $2\delta_{\inf}\cdot \log(\xi)=\log(2)$. There is a constant $\epsilon < 1$ so that for any $p\in \tilde{S}$ there is a metric $d_{p,\infty}$ on $\partial \tilde{S}$ which satisfies:
$$\xi^{-(\eta, \zeta)_p}\geq d_{p,\infty}(\eta, \zeta)\geq (1-\epsilon)\xi^{-(\eta, \zeta)_p} $$ 
\end{Prop}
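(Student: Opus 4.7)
The plan is to construct $d_{p,\infty}$ by the standard Gromov--Ghys--de~la~Harpe chain procedure applied to the ``almost-metric'' $\rho(\eta,\zeta):=\xi^{-(\eta,\zeta)_p}$, and then to verify the two-sided bound by exploiting the near-ultrametric character of $\rho$ that follows from $\delta$-hyperbolicity.

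First I would record the fundamental inequality for the boundary Gromov product. From the standard thin-triangle estimates one checks that for any three boundary points and any admissible sequences representing them,
\[
(\eta,\zeta)_p \;\geq\; \min\bigl\{(\eta,\omega)_p,(\omega,\zeta)_p\bigr\} - 2\delta_{\inf},
\]
where the $2\delta_{\inf}$ arises from two applications of slimness together with the $\liminf$ in the definition of the Gromov product on the boundary. Translating by the monotone function $t\mapsto \xi^{-t}$ and using $\xi^{2\delta_{\inf}}=2$ gives the quasi-ultrametric inequality
\[
\rho(\eta,\zeta)\;\leq\;2\max\bigl\{\rho(\eta,\omega),\rho(\omega,\zeta)\bigr\}.
\]
The upper bound in the proposition is then immediate once the definition below is in place.

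Next I would define, for $\eta,\zeta\in\partial\tilde{S}$,
\[
d_{p,\infty}(\eta,\zeta)\;:=\;\inf\Bigl\{\sum_{i=0}^{n-1}\rho(\eta_i,\eta_{i+1})\;\Big|\;\eta=\eta_0,\eta_1,\ldots,\eta_n=\zeta\Bigr\}.
\]
Symmetry, the triangle inequality, and $d_{p,\infty}\leq \rho$ (whence the upper bound $d_{p,\infty}(\eta,\zeta)\leq\xi^{-(\eta,\zeta)_p}$) are automatic from this construction. Non-degeneracy will follow from the lower bound below. The main work is the inequality
\[
d_{p,\infty}(\eta,\zeta)\;\geq\;(1-\epsilon)\,\rho(\eta,\zeta),
\]
for some $\epsilon<1$ that does \emph{not} depend on $\eta,\zeta$. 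The argument is a Frink-type induction on chain length: by induction on $n$, for every chain $\eta_0,\ldots,\eta_n$ one shows
\[
\rho(\eta_0,\eta_n)\;\leq\;K\sum_{i=0}^{n-1}\rho(\eta_i,\eta_{i+1}),
\]
with a universal constant $K$. One splits the chain at the first index $k$ where the partial sum $\sum_{i<k}\rho(\eta_i,\eta_{i+1})$ exceeds half of the total sum $S$; then each half has sum $\leq S$, so by induction $\rho(\eta_0,\eta_k)\leq KS$ and $\rho(\eta_k,\eta_n)\leq KS$, while the middle term $\rho(\eta_{k-1},\eta_k)\leq S$. Three applications of the quasi-ultrametric inequality $\rho(\cdot,\cdot)\leq 2\max\{\cdot,\cdot\}$ then control $\rho(\eta_0,\eta_n)$ in terms of $S$, and one chooses $K$ so that the induction closes. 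Setting $1-\epsilon:=1/K$ yields the desired lower bound and forces $\epsilon<1$.

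The main obstacle is precisely this Frink induction: the constant $\xi^{2\delta_{\inf}}=2$ sits at the threshold where the chain procedure still produces a genuine metric, so one has to be careful that the accumulated multiplicative losses from splitting the chain do not blow up. This is the reason the normalization $2\delta_{\inf}\log\xi=\log 2$ appears in the statement; any larger choice of $\xi$ would destroy the comparison. Once the two-sided bound holds, the fact that $d_{p,\infty}$ is a metric (in particular separates points distinct in $\partial\tilde{S}$) follows from the definition of the Gromov product on the boundary, which forces $(\eta,\zeta)_p<\infty$ when $\eta\neq\zeta$.
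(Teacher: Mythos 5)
The paper gives no proof of this proposition (it is quoted as standard, in effect from \cite[III.H.3.21]{BridsH1999} or Ghys--de la Harpe), so the only thing to check is whether your argument is internally sound. It is not: there is a genuine gap in the Frink induction, and it is located exactly at the constant you yourself flag as ``the threshold.'' You derive the quasi-ultrametric inequality $\rho(\eta,\zeta)\leq K\max\{\rho(\eta,\omega),\rho(\omega,\zeta)\}$ with $K=\xi^{2\delta_{\inf}}=2$, and then claim the chain induction closes. Run the induction: split the chain at the last index $k$ with $\sum_{i<k}\rho(\eta_i,\eta_{i+1})\leq S/2$, so that both outer sub-chains have sum at most $S/2$ and the middle edge satisfies only the trivial bound $\rho(\eta_k,\eta_{k+1})\leq S$. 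With induction constant $K'$ you get
$$\rho(\eta_0,\eta_n)\;\leq\;K\max\Bigl\{\tfrac{K'S}{2},\,K\max\bigl\{S,\tfrac{K'S}{2}\bigr\}\Bigr\}\;=\;\max\Bigl\{K^2S,\;\tfrac{K^2K'S}{2},\;\tfrac{KK'S}{2}\Bigr\},$$
and closing the induction requires $K^2K'/2\leq K'$, i.e.\ $K\leq\sqrt{2}$. At $K=2$ the offending term is $2K'S$, and no choice of $K'$ satisfies $2K'\leq K'$; the recursion diverges. (As literally written in your proposal --- ``each half has sum $\leq S$, so by induction each piece is $\leq K'S$'' --- the combination gives $2K'S$, which is already worse than the induction hypothesis.) So $2$ is not the threshold for this procedure; $\sqrt{2}$ is, and your constants land on the wrong side of it.

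The normalization $2\delta_{\inf}\log\xi=\log 2$ in the statement is in fact the fingerprint of the correct bookkeeping: it says $\xi^{\delta_{\inf}}=\sqrt{2}$, which is exactly the condition $e^{\epsilon\delta}\leq\sqrt{2}$ appearing in the standard references. What those references use is the Gromov product inequality with defect $\delta$, namely $(\eta,\zeta)_p\geq\min\{(\eta,\omega)_p,(\omega,\zeta)_p\}-\delta$ for boundary points (obtained from the four-point characterization of hyperbolicity and a careful passage to the boundary via the $\sup$--$\liminf$ definition), not the defect $2\delta_{\inf}$ you wrote down. With defect $\delta_{\inf}$ one gets $K=\xi^{\delta_{\inf}}=\sqrt{2}$, the induction above closes with $K'=2$ (equivalently $1-\epsilon=3-2\sqrt{2}>0$ in the usual formulation), and the rest of your outline --- the chain definition of $d_{p,\infty}$, the upper bound from the one-term chain, nondegeneracy from $(\eta,\zeta)_p<\infty$ for $\eta\neq\zeta$ --- is fine. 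To repair the proof you must either establish the boundary inequality with defect $\delta_{\inf}$ rather than $2\delta_{\inf}$, or replace the naive bisection induction by a genuinely different argument (a Kraft-inequality/alphabetic-coding type estimate does handle $K=2$, but that is not what you described and is not needed if the defect is tracked correctly).
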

$d_{p,\infty}$ is a \textit{Gromov metric} and $(\partial \tilde{S},d_{p,\infty}) $ the \textit{Gromov boundary}. A quasi-isometry between Gromov $\delta$-hyperbolic spaces extends to a homeomorphism between the Gromov boundaries. 
Any bi-infinite $L$-quasi-geodesic converges to two distinct boundary points, and between any two distinct boundary points there is a connecting bi-infinite geodesic.
\begin{Lem}\label{lemboundHausdist}
There is a function $H(L,\delta)>0$ such that for any $\delta$-hyperbolic space $\tilde{S}$ and for any two $L$-quasi-geodesics $c,c'$ in $\tilde{S}$ with the same endpoints in $\tilde{S} \cup \partial\tilde{S}$ the $H(L,\delta)$-neighborhood of $c$ contains  $c'$.
\end{Lem}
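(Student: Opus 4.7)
The statement is the classical Morse lemma (stability of quasi-geodesics) in Gromov hyperbolic spaces, in the form that allows endpoints at infinity. My plan is to first establish the case of quasi-geodesic segments with endpoints in $\tilde{S}$, and then extend to ideal endpoints by an exhaustion argument.

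For the finite case, I would show that any continuous $L$-quasi-geodesic $c$ with endpoints $x,y\in\tilde{S}$ lies in a uniform $H_0(L,\delta)$-neighborhood of every geodesic $\gamma$ from $x$ to $y$. (Continuity is arranged by the standard device of replacing $c$ on each integer interval by a geodesic segment, which worsens the quasi-geodesic constants only mildly.) Let $p=c(t_0)$ realize $D:=\max_t d(c(t),\gamma)$, pick parameters $s_1<t_0<s_2$ with $d(p,c(s_i))\approx 2D$, and let $q_i\in\gamma$ be a nearest point to $c(s_i)$, so $d(c(s_i),q_i)\leq D$. Applying $\delta$-slimness twice to the quadrilateral with vertices $c(s_1),c(s_2),q_2,q_1$ forces $p$ to lie $2\delta$-close to one of the sides $[c(s_i),q_i]$: the side $[q_1,q_2]\subset\gamma$ is ruled out because $d(p,\gamma)=D$. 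This bounds the extrinsic distance $d(p,c(s_i))$ by $D+2\delta$. The quasi-geodesic inequality, applied to the intrinsic distance $|t_0-s_i|$, then yields a lower bound on the extrinsic distance that is linear in $D$, contradicting $D+2\delta$ unless $D\leq H_0(L,\delta)$ for an explicit function $H_0$. Applying this bound to $c$ and $c'$ against the same geodesic $\gamma$ and taking the triangle inequality for the Hausdorff distance settles the finite case with $H(L,\delta):=2H_0(L,\delta)$.

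For ideal endpoints, suppose $c,c'$ share one or both endpoints on $\partial\tilde{S}$. For a common ideal endpoint $\xi$, the definition of the Gromov product on the boundary and $\delta$-hyperbolicity imply that admissible sequences along $c$ and $c'$ converging to $\xi$ have unbounded mutual Gromov product, from which one extracts points $x_n\in c$, $x'_n\in c'$ with $d(x_n,x'_n)$ uniformly bounded; finite endpoints are matched directly. Given any $p\in c'$ far enough from the endpoints, I apply the finite Morse lemma to the sub-quasi-geodesic of $c'$ between suitable $x'_n,y'_n$ flanking $p$, and the sub-quasi-geodesic of $c$ between $x_n,y_n$, after concatenating bounded-length geodesic correction pieces so the endpoints match exactly. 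The resulting sub-quasi-geodesics have constants depending only on $L,\delta$, so the finite lemma places $p$ in a uniform neighborhood of $c$.

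The main obstacle is the finite Morse lemma itself: the slimness-of-quadrilateral step, combined with the reduction to continuous quasi-geodesics, must be executed carefully to yield an explicit function $H_0(L,\delta)$. Once that is in hand, the passage to the ideal boundary is technically routine and rests only on the standard fact that two admissible sequences tending to the same boundary point have unbounded Gromov product, together with $\delta$-hyperbolicity to convert large Gromov products into bounded extrinsic distances between representative points.
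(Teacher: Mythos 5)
The paper offers no argument here: it simply cites the Morse stability theorem for quasi-geodesic segments \cite[III H.1.7]{BridsH1999} and its extension to quasi-geodesics with ideal endpoints \cite[I 3.2]{CoornP1993}. You are therefore reproving a quoted result, and your overall architecture (tame the quasi-geodesic, prove the finite case, exhaust to reach the boundary) is the standard one; the ideal-endpoint reduction you sketch is indeed routine once the finite case is in hand.

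However, your finite-case argument has a genuine gap at the quadrilateral step. $\delta$-slimness of the geodesic quadrilateral with vertices $c(s_1),c(s_2),q_2,q_1$ only constrains points lying on its four \emph{geodesic} sides, and your point $p=c(t_0)$ lies on the quasi-geodesic arc of $c$ between $c(s_1)$ and $c(s_2)$, not on the geodesic $[c(s_1),c(s_2)]$. So slimness gives no information about $p$; asserting that $p$ is $2\delta$-close to one of the sides presupposes that the quasi-geodesic arc tracks the geodesic side, which is essentially the statement being proved. The ingredient your sketch is missing is the exponential divergence estimate (\cite[III H.1.6]{BridsH1999}): a rectifiable path joining $x$ to $y$ that avoids the ball $B(w,R)$ about a point $w\in[x,y]$ has length at least exponential in $R/\delta$. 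The standard proof first uses this, applied to the concatenation $[c(s_1),q_1]\cup[q_1,q_2]_\gamma\cup[q_2,c(s_2)]$ or to $c$ itself, to bound $\sup_{w\in\gamma}d(w,\mathrm{im}\,c)$, and only afterwards bounds $\sup_t d(c(t),\gamma)$ by a connectivity/length argument. A secondary arithmetic issue: even granting the slimness conclusion, choosing $d(p,c(s_i))\approx 2D$ only yields the lower bound $(1+L)^{-2}(2D-L)-L$ on $d(p,c(s_i))$, which does not contradict the upper bound $D+2\delta$ unless $(1+L)^2<2$; you would need $d(p,c(s_i))\approx K(L)\,D$ with $K(L)$ large. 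As written, the central estimate of the finite Morse lemma is not established.
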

\begin{proof}
This follows from \cite[III 1.7]{BridsH1999} and \cite[I Proposition 3.2]{CoornP1993}
\end{proof}
\textbf{Notation:} If $\tilde{S}$ is a $\delta$-hyperbolic $\Cat(0)$ space denote by $[x,y]_{\tilde{S}},~x,y\in \tilde{S} \cup \partial\tilde{S}$, a parametrized geodesic segment connecting $x$ with $y$. If the space $\tilde{S}$ is clear from the context we abbreviate $[x,y]:=[x,y]_{\tilde{S}}$. For $x\in \tilde{S},~y\in \tilde{S}\cup \partial \tilde{S}$, $[x,y]$ is unique up to reparametrization. \\
For a base point $p\in \tilde{S}$ and  $U\subset \tilde{S}$ the \textit{boundary shadow} $sh_{p,\tilde{S}}(U) \subset \partial\tilde{S}$ is the set of all points $\eta \in \partial \tilde{S}$ such that at least one geodesic ray $[p,\eta]$ connecting $p$ and $\eta$ intersects $U$. Again if the corresponding space is clear from the context  we abbreviate $sh_{p}(U):= sh_{p,\tilde{S}}(U)$.  $sh_{p}(U)$ is Borel in $\partial\tilde{S}$ if and only if $U$ is Borel. \\
 In a $\delta$-hyperbolic space $\tilde{S}$ we need to estimate the size of shadows of balls. Recall that by construction of the Gromov metric  $d_{p,\infty}(\eta,\zeta)$ is comparable to $\xi^{-(\eta,\zeta)_p}$.
\begin{Prop}\label{Propshadowsmetrsmall}
For each $r>0$ there is a constant $C_{sh}(r)>0$ so that the following holds.
 For a pair of distinct points $p,x\in \tilde{S}$  extend the geodesic segment $[p,x]$ to a geodesic ray with endpoint $\eta$. Then the shadow $sh_p(B(x,r))$ is contained in the boundary ball of $d_{p,\infty}$-radius $C_{sh}(r)\xi^{-d(p,x)}$ centered at $\eta$.
\end{Prop}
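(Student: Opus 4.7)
The plan is to bound the Gromov product $(\eta,\zeta)_p$ from below for every $\zeta\in sh_p(B(x,r))$, and then convert that bound into the announced estimate on $d_{p,\infty}(\eta,\zeta)$ via the upper half of Proposition \ref{Prpgrmetric}, namely $d_{p,\infty}(\eta,\zeta)\leq \xi^{-(\eta,\zeta)_p}$.

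Fix an arbitrary $\zeta\in sh_p(B(x,r))$. The first step is to use the definition of the shadow to select one specific geodesic ray from $p$ to $\zeta$ that enters $B(x,r)$, together with a point $y$ on it satisfying $d(x,y)<r$. Writing $D:=d(p,x)$ and $D':=d(p,y)$, the triangle inequality then forces $|D-D'|\leq r$. Note that $x$ lies on $[p,\eta]$ by hypothesis, while $y$ lies on the chosen $[p,\zeta]$, so both serve as comparable benchmarks on the two rays.

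Next, for large parameters $s,t$, I let $x_s\in[p,\eta]$ and $y_t\in[p,\zeta]$ denote the arclength-$s$ and arclength-$t$ points. Since both rays are unit-speed geodesics passing through $x$ and $y$ respectively,
\[
d(x_s,y_t)\leq (s-D)+d(x,y)+(t-D')\leq s+t-D-D'+r.
\]
Substituting this into $2(x_s,y_t)_p=s+t-d(x_s,y_t)$ and using $D'\geq D-r$ yields $(x_s,y_t)_p\geq D-r$. Passing to the liminf over admissible sequences representing $\eta$ and $\zeta$ gives $(\eta,\zeta)_p\geq d(p,x)-r$, after which Proposition \ref{Prpgrmetric} produces
\[
d_{p,\infty}(\eta,\zeta)\leq \xi^{-(\eta,\zeta)_p}\leq \xi^{r}\cdot\xi^{-d(p,x)},
\]
so the constant $C_{sh}(r):=\xi^{r}$ does the job.

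I do not expect a genuine obstacle; the argument is a single triangle-inequality bookkeeping combined with the definition of the boundary Gromov product. The only point requiring mild care is that membership in $sh_p(B(x,r))$ only guarantees the existence of at least one ray from $p$ to $\zeta$ meeting $B(x,r)$, so the estimate on $d(x_s,y_t)$ must be extracted from that specific ray rather than from an arbitrary geodesic to $\zeta$. This is legitimate because $(\eta,\zeta)_p$ is defined as a supremum over admissible sequences and we are free to pick the representing sequence $\{y_t\}$ along the ray we have chosen.
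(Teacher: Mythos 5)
Your proof is correct and follows essentially the same route as the paper: both bound the Gromov product $(\eta,\zeta)_p$ from below by $d(p,x)-r$ using the entry point $y$ of the chosen ray $[p,\zeta]$ into $B(x,r)$, and then apply the upper bound $d_{p,\infty}(\eta,\zeta)\leq \xi^{-(\eta,\zeta)_p}$ with the same constant $C_{sh}(r)=\xi^{r}$. The only cosmetic difference is that you verify the needed monotonicity of the Gromov product along the two rays by an explicit triangle-inequality computation, where the paper simply asserts it.
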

\begin{proof}
 Define first $C_{sh}(r):=\xi^{r}$. For a point $\zeta \in sh_p(B(x,r))$ fix the parametrized geodesic ray $[p,\zeta]$ that connects  $p$  with $\zeta$. Observe first that the Gromov product is increasing along  geodesic rays i.e.  for $s_1\leq s_2,~ t_1\leq t_2$ it follows
$$([p,\eta](s_1),[p,\zeta](t_1))_p\leq ([p,\eta](s_2),[p,\zeta](t_2))_p.$$
As $[p,\zeta]$ intersects $B(x,r)$ at some point $y$, 
$$(\eta,\zeta)_p \geq (x,y)_p\geq 1/2(d(p,x)+(d(p,x)-r) -r)=d(p,x)-r.$$
So
$$d_{p,\infty}(\eta, \zeta)\leq \xi^{-(\eta, \zeta)_p}\leq C_{sh}(r)\xi^{-d(p,x)}$$  
 \end{proof}
\begin{Lem}\label{Lemnestedshadows}
Let $f:\tilde{S} \to \tilde{T}$  be an $L$-quasi-isometry between $\delta$-hyperbolic spaces.\\ 
For $H(\delta,L)$ as in Lemma \ref{lemboundHausdist} and for $r>(H(\delta,L)+L)(1+L)$ define 
\begin{eqnarray*}
r_{s}&:=&r/(1+L)-H(\delta,L)-L>0\\
r_{b}&:=&(1+L)r+L+H(\delta,L) 
\end{eqnarray*}
Then one concludes that  
$$sh_{\tilde{T},f(p)}(B(f(x),r_s))\subset f(sh_{\tilde{S},p}(B(x,r)))\subset sh_{\tilde{T},f(p)}(B(f(x),r_b))$$
for all points $p,x\in \tilde{S}$.
\end{Lem}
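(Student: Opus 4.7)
The plan is to carry out both inclusions by pushing geodesic rays through the quasi-isometry and invoking Lemma \ref{lemboundHausdist} to replace the resulting $L$-quasi-geodesics by actual geodesics that fellow-travel them within $H(\delta,L)$. The constants $r_s$ and $r_b$ are then exactly what the triangle inequality together with the two halves of the quasi-isometry inequality produce, so the real work is bookkeeping.

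For the right inclusion, I start with $\eta \in sh_{\tilde{S},p}(B(x,r))$, so some geodesic ray $[p,\eta]_{\tilde{S}}$ meets $B(x,r)$ at a point $y$. The image $f\circ [p,\eta]_{\tilde{S}}$ is an $L$-quasi-geodesic in $\tilde{T}$ connecting $f(p)$ to the boundary point $f(\eta)$, so by Lemma \ref{lemboundHausdist} some geodesic $[f(p),f(\eta)]_{\tilde{T}}$ passes within distance $H(\delta,L)$ of $f(y)$ at some point $z$. Since $d_{\tilde{T}}(f(x),f(y))\leq (1+L)d_{\tilde{S}}(x,y)+L\leq (1+L)r+L$, the triangle inequality gives $d_{\tilde{T}}(z,f(x))\leq (1+L)r+L+H(\delta,L)=r_b$, so $f(\eta)\in sh_{\tilde{T},f(p)}(B(f(x),r_b))$.

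For the left inclusion, take $\zeta\in sh_{\tilde{T},f(p)}(B(f(x),r_s))$ and write $\zeta=f(\eta)$ using the boundary extension of $f$. Some geodesic $[f(p),\zeta]_{\tilde{T}}$ meets $B(f(x),r_s)$ at a point $w$. Now apply Lemma \ref{lemboundHausdist} in the opposite direction: the quasi-geodesic $f\circ [p,\eta]_{\tilde{S}}$ and the geodesic $[f(p),\zeta]_{\tilde{T}}$ share endpoints in $\tilde{T}\cup \partial \tilde{T}$, so some point $f(y')$ on the quasi-geodesic satisfies $d_{\tilde{T}}(f(y'),w)\leq H(\delta,L)$, hence $d_{\tilde{T}}(f(y'),f(x))\leq r_s+H(\delta,L)$. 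The lower half of the quasi-isometry inequality then yields
\[
d_{\tilde{S}}(y',x)\leq (1+L)\bigl(d_{\tilde{T}}(f(y'),f(x))+L\bigr)\leq (1+L)\bigl(r_s+H(\delta,L)+L\bigr)=r,
\]
where the last equality is precisely the definition of $r_s$. Thus $y'\in [p,\eta]_{\tilde{S}}\cap B(x,r)$ and $\eta\in sh_{\tilde{S},p}(B(x,r))$.

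The main (mild) obstacle is checking that Lemma \ref{lemboundHausdist} applies in the second case, since one of the two curves has one endpoint in $\partial\tilde{T}$; this is exactly what that lemma allows (endpoints in $\tilde{T}\cup\partial\tilde{T}$), and the hypothesis $r>(H(\delta,L)+L)(1+L)$ is built in precisely to ensure $r_s>0$ so that the ball $B(f(x),r_s)$ is nonempty and the computation is not vacuous.
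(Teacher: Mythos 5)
Your proof is correct and follows essentially the same route as the paper: represent boundary points via the boundary extension of $f$, use Lemma \ref{lemboundHausdist} to compare the image quasi-geodesic with a genuine geodesic, and track the constants through the quasi-isometry inequalities. The paper only writes out the left inclusion and declares the other "analogous"; you supply both, with the same bookkeeping.
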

\begin{proof} We only show $sh_{\tilde{T},f(p)}(B(f(x),r_s))\subset f(sh_{\tilde{S},p}(B(x,r)))$ the other inequality is analogous. \\ 
Since $f$ maps $\partial \tilde{S}$ on $\partial \tilde{T}  $ homeomorphically each point in $sh_{\tilde{T},f(p)}(B(f(x),r_s))$ can be represented as $f(\zeta)$. We have to show that $d([p, \zeta]_{\tilde{S}},x)<r$. Since $f([p,\zeta]_{\tilde{S}})$ is a $L$-quasi-geodesic,   $[f(p),f(\zeta)]_{\tilde{T}}$ is contained in   $H(\delta,L)$-neighborhood  of $f([p,\zeta]_{\tilde{S}})$. Therefore $f([p,\zeta]_{\tilde{S}})$ intersects a ball with center $f(x)$ and $\tilde{T}$-radius $r_s+H(L,\delta)$ in a point $f(y)$. As $(1+L)^{-1}d(x,y)-L<d(f(x),f(y))<r_s+H(\delta,L)$ we conclude that $d(x,y)\leq  (1+L)(r_s+H(\delta,L)+L)=r$.
\end{proof}
For $\Gamma$ a group of isometries  acting properly discontinuously, freely and cocompactly on $\tilde{S}$, the Hausdorff measure $\mu_p$ of the Gromov metric $d_{p,\infty}$  on $\partial \tilde{S}$ can be computed by using the theory Patterson-Sullivan measures, see \cite[Section 1-3]{Sulli1979}, \cite[Section 4-8]{Coorn1993}:  
Fix a $\Gamma$-invariant non-zero Radon-measure $\ell$  on $\tilde{S}$ . The \textit{volume entropy} is defined as
$$e(\tilde{S},\Gamma):= \limsup\limits_{R \to \infty}\frac{\log\left(\ell\left(B(p,R)\right)\right)}{R}.$$
\textbf{Convention:} By entropy we mean volume entropy. Here $\tilde{S}$ is the isometric universal covering of $S=\tilde{S}/\Gamma$, so we abbreviate $e(S):=e(\tilde{S},\Gamma)$. We always assume that the group $\Gamma$ acts properly discontinuously, freely and cocompactly by isometries on $\tilde{S}$. 
\begin{Thm} \label{thmhdimentro}
Let $\tilde{S}$ be a $\delta$-hyperbolic $\Cat(0)$-space  and let $p\in \tilde{S}$ be a base point. Denote by $\mu_{p}$ the Hausdorff measure of the Gromov boundary with respect $d_{p,\infty}$
\begin{enumerate}
\item  $\mu_{p}$  is a finite measure supported on $\partial \tilde{S}$. 
\item For the constant $\xi$ as in Proposition \ref{Prpgrmetric} the Hausdorff dimension $d$ equals $e(S)/\log(\xi)$
\item Denote by $d$ again the Hausdorff dimension. Then there is a constant $C>0$, so that the measure $\mu_p$ of a each boundary ball $B(\eta,r)$ of $d_{p,\infty}$-radius $r$ can be estimated by:
$C^{-1}r^d \leq \mu_{p}(B(\eta,r)) \leq  Cr^d$      
\end{enumerate}
\end{Thm}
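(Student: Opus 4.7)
The plan is to construct a Patterson-Sullivan measure $\nu_p$ on $\partial\tilde{S}$ of dimension $e(S)$, show it is Ahlfors regular of dimension $d=e(S)/\log(\xi)$ with respect to the Gromov metric $d_{p,\infty}$, and then deduce the three claims by comparing $\nu_p$ with the Hausdorff measure $\mu_p$.

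First, following \cite{Coorn1993}, I would form the Poincar\'e series $\sum_{\gamma\in\Gamma}e^{-sd(p,\gamma p)}$, extract a weak-$\ast$ limit of normalized orbital sums as $s$ decreases to $e(S)$ (using Patterson's trick if the series converges at $s=e(S)$), and obtain a non-trivial $\Gamma$-equivariant conformal density $\{\nu_q\}_{q\in\tilde{S}}$ of dimension $e(S)$. Cocompactness forces the limit set to coincide with $\partial\tilde{S}$, so each $\nu_q$ is a finite measure of full support on the boundary.

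The heart of the argument is the Sullivan shadow lemma: for every $r$ above a threshold there is $K=K(r)>0$ so that
$$K^{-1}e^{-e(S)d(p,\gamma p)}\leq \nu_p(sh_p(B(\gamma p,r)))\leq K e^{-e(S)d(p,\gamma p)}$$
for every $\gamma\in\Gamma$. Cocompactness lets one replace $\gamma p$ by an arbitrary $x\in\tilde{S}$ at the cost of enlarging $r$ by the diameter of a fundamental domain, yielding $\nu_p(sh_p(B(x,r)))\asymp e^{-e(S)d(p,x)}$. Proposition \ref{Propshadowsmetrsmall} already places $sh_p(B(x,r))$ inside the $d_{p,\infty}$-ball of radius $C_{sh}(r)\xi^{-d(p,x)}$ centered at the endpoint $\eta$ of the extension of $[p,x]$. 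The converse inclusion, that a $d_{p,\infty}$-ball around $\eta$ of radius $c\,\xi^{-d(p,x)}$ sits inside $sh_p(B(x,r'))$ for some $r'=r'(c)$, is a consequence of $\delta$-hyperbolicity: any $\zeta\in\partial\tilde{S}$ with $(\eta,\zeta)_p\geq d(p,x)-O(1)$ has its geodesic $[p,\zeta]$ staying $\delta$-close to $[p,\eta]$ up to distance $d(p,x)$ from $p$. Combining both inclusions with the shadow mass estimate produces, for all $\eta\in\partial\tilde{S}$ and all sufficiently small $r>0$,
$$C^{-1}r^d\leq \nu_p(B(\eta,r))\leq Cr^d,\qquad d:=e(S)/\log(\xi),$$
where balls are taken in the Gromov metric; that is, $\nu_p$ is Ahlfors $d$-regular on $(\partial\tilde{S},d_{p,\infty})$.

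From Ahlfors regularity the three claims follow formally. Any Ahlfors $d$-regular finite measure on a compact metric space is equivalent to the $d$-dimensional Hausdorff measure with bounded Radon-Nikodym derivative, which forces the Hausdorff dimension of $(\partial\tilde{S},d_{p,\infty})$ to be exactly $d=e(S)/\log(\xi)$, makes $\mu_p$ finite and supported on the full boundary, and transfers the ball estimate from $\nu_p$ to $\mu_p$. The main obstacle lies on the Patterson-Sullivan side: the shadow lemma and the reverse inclusion between Gromov boundary balls and metric shadows both require careful exploitation of $\delta$-hyperbolicity and $\Cat(0)$ convexity to translate distance estimates in $\tilde{S}$ into Gromov-product estimates on $\partial\tilde{S}$; once these geometric conversions are set up, the measure-theoretic consequences are routine.
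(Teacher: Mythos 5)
Your proposal is correct and follows essentially the same route as the paper, which for this theorem simply defers to Coornaert's Patterson--Sullivan theory: construction of a conformal density of dimension $e(S)$, the Sullivan shadow lemma, the two-sided comparison between shadows and Gromov-metric balls, and the standard fact that an Ahlfors regular measure is comparable to the Hausdorff measure of the corresponding dimension. The paper offers no independent argument beyond the citation, so your write-up is in effect a faithful expansion of the intended proof.
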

\begin{proof}
 We refer to \cite{Coorn1993}.
\end{proof}

\subsection{Geometry of flat surfaces}\label{sectflatmetric}
We introduce the geometry of closed flat surfaces and  refer to \cite{Minsk1992}, \cite{Streb1984}. \\
\textbf{Convention:} Any closed surface is assumed to be of genus $g \geq 2$.\\ 
A \textit{half-translation structure} $S$ on a closed topological surface $X$ is a choice of charts such that, away from a finite set of points $\Sigma$, the transition functions are half-translations i.e. they are of the form $z \mapsto \pm z+c$. The pull-back of the flat metric in each chart gives a metric on $X-\Sigma$. We require that the metric on $S-\Sigma$ extends to a singular cone $\Cat$(0)-metric on $S$. Then $S$ is a \textit{flat surface}. 
\begin{Prop}
 In any homotopy class of arcs with fixed endpoints on a closed flat surface there exists a unique local geodesic representative which is length-minimizing.\\ 
In any free homotopy class of closed curves there is a length-minimizing locally geodesic representative which passes through singularities.
\end{Prop}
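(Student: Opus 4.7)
The plan is to reduce both statements to standard CAT(0) facts applied to the universal cover $\tilde{S}$ together with the observation that at each singularity the cone angle is $\geq 2\pi$ (which is precisely the condition making a singular flat metric locally $\Cat(0)$).

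For the first claim, I would lift a representative of the homotopy class to $\tilde{S}$, obtaining endpoints $\tilde{p}, \tilde{q} \in \tilde{S}$ that depend only on the homotopy class of the arc. Since $\tilde{S}$ is a proper, complete $\Cat(0)$ space, there is a unique geodesic segment $[\tilde{p},\tilde{q}]_{\tilde{S}}$ realizing $d(\tilde{p},\tilde{q})$. Its projection $c$ to $S$ has length $d(\tilde{p},\tilde{q})$, which is the infimum of lengths of paths from $\tilde{p}$ to $\tilde{q}$ in $\tilde{S}$, hence also the infimum over representatives of the homotopy class downstairs. Uniqueness follows because any other local geodesic representative lifts to a local geodesic in $\tilde{S}$ with the same endpoints, and in a $\Cat(0)$ space local geodesics between fixed endpoints are unique. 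The only thing to check is that $c$ is genuinely locally geodesic at points of $\Sigma$: away from $\Sigma$ the covering is a local isometry, and at singular points the $\Cat(0)$ hypothesis forces the angle between the two sides of $c$ to be $\geq \pi$ on each sector, which is exactly the definition of a local geodesic at a cone point.

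For the second claim, a nontrivial free homotopy class of closed curves corresponds to a conjugacy class of a non-trivial deck transformation $\gamma \in \Gamma$. Because $\Gamma$ acts freely, properly discontinuously and cocompactly by isometries on the $\Cat(0)$ space $\tilde{S}$, every non-trivial $\gamma$ is an axial isometry: the translation length $|\gamma| := \inf_{x \in \tilde{S}} d(x,\gamma x)$ is strictly positive and is achieved along a $\gamma$-invariant geodesic line $\ell$ (an axis of $\gamma$), by the standard semi-simplicity argument for cocompact $\Cat(0)$ actions. The quotient $\ell / \langle \gamma \rangle$ embeds in $S$ as a closed local geodesic of length $|\gamma|$, and since any closed curve in the free homotopy class lifts to a path from some $\tilde{x}$ to $\gamma \tilde{x}$ of length $\geq d(\tilde{x}, \gamma \tilde{x}) \geq |\gamma|$, this representative is length-minimizing in its free homotopy class.

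The main obstacle, and the reason uniqueness is dropped in the closed-curve case, is exactly the behavior at singularities. Unlike in the fixed-endpoint case, the axis $\ell$ of $\gamma$ may pass through points of $\tilde{\Sigma}$; at such a point one can re-route the geodesic through different pairs of sectors of total angle $\geq \pi$ on each side and obtain a different $\gamma$-invariant axis of the same translation length. This produces several length-minimizing local geodesic representatives of the same free homotopy class, all passing through singularities, which is the content of the second half of the proposition.
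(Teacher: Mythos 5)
Your treatment of the first claim is fine and is essentially the paper's argument seen from the universal cover: uniqueness of geodesics between fixed endpoints in the $\Cat(0)$ space $\tilde{S}$ (equivalently, absence of geodesic bigons) gives both existence and uniqueness of the locally geodesic representative of an arc.

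The second claim, however, is where the actual content lies, and your argument has a genuine gap there. The axis/semisimplicity argument correctly produces \emph{some} length-minimizing closed local geodesic in the free homotopy class, but the proposition asks for one that \emph{passes through singularities}, and you never handle the case in which the minimizer you have produced is disjoint from $\Sigma$. Your closing paragraph discusses the opposite case -- an axis that does meet $\tilde{\Sigma}$ -- where there is nothing left to prove, and the non-uniqueness picture you sketch there (re-routing through different sectors to get other axes of the same translation length) is not justified and is not what is needed. The paper's proof addresses precisely the missing case: if a length-minimizing closed geodesic $\alpha$ avoids $\Sigma$, then since the half-translation structure preserves direction on $S-\Sigma$, $\alpha$ never changes direction, hence is simple and sweeps out an isometrically embedded flat cylinder; taking this cylinder maximal, its boundary must contain singularities, and $\alpha$ can be homotoped onto a boundary component without changing its length, yielding a length-minimizing locally geodesic representative through singularities. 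Without this cylinder argument (or an equivalent one, e.g.\ via the flat-strip structure of the union of axes of $\gamma$, whose boundary leaves are exactly the representatives meeting $\tilde{\Sigma}$), the second statement remains unproved.
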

\begin{proof}
In both cases, the existence of a length minimizing representative follows from a standard Arzel\`{a}-Ascoli argument.\\ 
The uniqueness  of geodesic arcs follows from the absence of geodesic bigons in $\Cat$(0)-spaces.\\
Suppose  $\alpha$ is a length minimizing closed curve that is disjoint from singularities. The half-translation structure on $S-\Sigma$ preserves direction. So, $\alpha$ does not change direction and is therefore simple. It sweeps out an isometrically embedded flat cylinder. Choose this embedding to be maximal. The boundary contains singularities and $\alpha$ can be homotoped to a boundary component without changing length.     
\end{proof}
Consider the isometric universal covering $\pi: \tilde{S} \to S$ with $\Gamma$ the group of deck transformations.  By the \v{S}varc- Milnor Lemma, any two $\Gamma$-invariant length metrics are quasi-isometric  and therefore  $\tilde{S}$ is Gromov $\delta$-hyperbolic.\\ 
 To study geodesic rays in $ \tilde{S}$,  recall that $\mu_{p}, ~p \in \tilde{S}$ is the Hausdorff measure on the Gromov boundary $\partial \tilde{S}$.
We need to estimate the size of shadows of metric balls.\\
Denote by $\Sigma$ the set of singularities on $S$. 
\begin{Prop}\label{PropShadSing}
There is some $C(r)$ so that for each $p \in \tilde{S}$, the $\mu_{p}$-measure of a shadow $sh_p(B(x,r))$ of a ball of radius $r>\sup\limits_{x\in S}d(x,\Sigma)$ centered at $x$ can be estimated by:
$$C(r)^{-1} \exp(-d(p,x))\leq \mu_{p}(sh_{p}(B(x,r)))\leq C(r) \exp(-d(p,x))$$ 
\end{Prop}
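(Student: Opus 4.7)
My plan is to handle the upper and lower bounds separately. The upper bound is essentially immediate from the preceding material, while the lower bound carries the substantive content and is where the hypothesis $r > \sup_{x\in S} d(x,\Sigma)$ enters.

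For the upper bound, extend the segment $[p,x]$ to a geodesic ray with endpoint $\eta \in \partial \tilde{S}$. Proposition \ref{Propshadowsmetrsmall} places $sh_p(B(x,r))$ inside the $d_{p,\infty}$-ball of radius $C_{sh}(r)\,\xi^{-d(p,x)}$ about $\eta$. Theorem \ref{thmhdimentro}(3) then bounds the $\mu_p$-measure of that boundary ball above by $C\bigl(C_{sh}(r)\,\xi^{-d(p,x)}\bigr)^d$. Invoking Theorem \ref{thmhdimentro}(2), $d\log\xi = e(S)$, so the right-hand side equals $C(C_{sh}(r))^d\exp(-e(S)d(p,x))$; under the running normalization $e(S)=1$ (the case of interest for the main theorem) this is the upper half of the claimed inequality.

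For the lower bound, the plan is to exhibit a boundary ball of $d_{p,\infty}$-radius comparable to $\xi^{-d(p,x)}$ that is contained in $sh_p(B(x,r))$; Theorem \ref{thmhdimentro}(3) will then deliver the matching lower bound on measure. Concretely, with $\eta$ as above, I would show that there is a constant $c=c(r)>0$ such that every $\zeta \in \partial \tilde{S}$ with $(\eta,\zeta)_p \geq d(p,x)+c$ lies in $sh_p(B(x,r))$. Granting this, Proposition \ref{Prpgrmetric} implies that the $d_{p,\infty}$-ball of radius $(1-\epsilon)\xi^{-d(p,x)-c}$ around $\eta$ is contained in the shadow, and Theorem \ref{thmhdimentro}(3) finishes the job.

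The main obstacle is verifying this containment. Pure $\delta$-hyperbolicity only guarantees that $[p,\zeta]$ passes within some uniform distance $D(\delta)$ of $x$, which is too weak on its own to conclude that the ray actually enters $B(x,r)$. The hypothesis $r > \sup_{x\in S}d(x,\Sigma)$ is tailored to the flat setting: it forces $B(x,r)$ to contain a lift of a singularity $s$ of $\tilde{S}$. Since the cone angle at $s$ exceeds $2\pi$, a positive measure of directions at $p$ near the direction of $\eta$ give rise to geodesic rays which are channelled through a neighborhood of $s$ and hence enter $B(x,r)$. Quantifying this uniformly in $x$ is the delicate step; using cocompactness of $\Gamma$ to reduce to finitely many local singular configurations, one arranges that $c(r)$ depends only on $r$.
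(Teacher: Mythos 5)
Your upper bound coincides with the paper's: Proposition \ref{Propshadowsmetrsmall} places the shadow in a boundary ball of $d_{p,\infty}$-radius $C_{sh}(r)\xi^{-d(p,x)}$, and Theorem \ref{thmhdimentro} (with the Hausdorff dimension $d=e(S)/\log\xi$ and the normalization $e(S)=1$) turns this into $\mu_p(sh_p(B(x,r)))\leq C_1(r)\exp(-d(p,x))$. That half is fine.

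The lower bound is where your argument has a genuine gap. You reduce it to the claim that every $\zeta$ with $(\eta,\zeta)_p\geq d(p,x)+c(r)$ lies in $sh_p(B(x,r))$, i.e.\ that the shadow contains a $d_{p,\infty}$-ball of radius comparable to $\xi^{-d(p,x)}$, and you explicitly leave the verification open. As stated the claim is doubtful: $\delta$-hyperbolicity only forces $[p,\zeta]$ to pass within a distance of order $\delta$ of $x$, and the hypothesis only requires $r>\sup_{x\in S}d(x,\Sigma)$, which can be much smaller than $\delta$; moreover a flat geodesic passing near a cone point need not pass through it, so proximity to the singularity does not put $\zeta$ in the shadow. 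The assertion that ``a positive measure of directions at $p$ near the direction of $\eta$'' is channelled through the singularity is precisely the nontrivial content, and even granted, a positive proportion of directions does not obviously produce a round boundary ball of the required radius centered at $\eta$. The paper avoids all of this: since $B(x,r)$ contains a singularity $\varsigma$ with $d(p,\varsigma)\leq d(p,x)+r$, it cites the shadow estimate for singularities from \cite{dankwart2011typical} (the same estimate quoted as item iii) in the proof of Proposition \ref{Propgengeod}), namely $\mu_p(sh_p(\varsigma))\geq C_2^{-1}\exp(-d(p,\varsigma))$, and uses the trivial inclusion $sh_p(\varsigma)\subset sh_p(B(x,r))$ to get the lower bound with $C=\max\{C_1(r),C_2\exp(r)\}$. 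That cited lemma encodes exactly the cone-angle/concatenation mechanism you gesture at (a geodesic $[p,\varsigma]$ extends geodesically past $\varsigma$ in a whole sector of directions), but it bounds the measure of the shadow directly rather than exhibiting a contained metric ball. To make your route rigorous you would either have to reprove that singularity shadow lemma or cite it; without one of these, the lower bound is not established.
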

\begin{proof}
By Proposition \ref{Propshadowsmetrsmall} and Theorem \ref{thmhdimentro} there is some $C_1(r)>0$ so that $\mu_{p}(sh_{p}(B(x,r))\leq C_1(r) \exp(-d(p,x))$.\\
To show the lower bound observe that $B(x,r)$ contains a singularity $\varsigma$. By \cite{dankwart2011typical} there is a constant $C_2$
so that 
$$C_2^{-1} \exp(-(d(p,x)+r))\leq C_2^{-1} \exp(-d(p,\varsigma))\leq \mu_{p}(sh_{p}(\varsigma))$$ 
Define $C:=\max\{C_1(r),C_2\exp(r)\}$
\end{proof}
\begin{Cor}
Fix a point $p$ on the universal covering $\pi:\tilde{S} \to S$ of a flat surface $S$. Then
the Hausdorff measure $\mu_p$ on $\partial \tilde{S}$ is doubling with respect to the Gromov metric $d_{p,\infty}$. For fixed $r> \sup\limits_{x\in S}d(x,\Sigma_S)$ the collection of shadows $\{sh_p(B(x,r)),~ x\in \tilde{S},r>0\}$ forms a Vitali covering. 
\end{Cor}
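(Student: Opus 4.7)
The plan is to verify both assertions separately, drawing only on the estimates already proved in the excerpt. Doubling follows immediately from Theorem \ref{thmhdimentro}(3): every $d_{p,\infty}$-ball satisfies $C^{-1}r^{d}\leq\mu_{p}(B(\eta,r))\leq Cr^{d}$, so $\mu_{p}(B(\eta,2r))/\mu_{p}(B(\eta,r))\leq C^{2}2^{d}$, which is all we need.

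To show that the shadows form a Vitali covering, I would check the two conditions of Definition \ref{DefVITALI} in turn. For the first, fix a shadow $U=sh_{p}(B(x,r))$ and extend $[p,x]_{\tilde{S}}$ to a geodesic ray with endpoint $\eta$. By Proposition \ref{Propshadowsmetrsmall}, $U$ is contained in the $d_{p,\infty}$-ball $B:=B(\eta,C_{sh}(r)\xi^{-d(p,x)})$, whose $\mu_{p}$-measure Theorem \ref{thmhdimentro}(3) bounds above by $C\cdot C_{sh}(r)^{d}\xi^{-d\cdot d(p,x)}=C\cdot C_{sh}(r)^{d}\exp(-d(p,x))$, using the identification $d\log(\xi)=e(S)=1$ from Theorem \ref{thmhdimentro}(2) together with the standing normalization of the entropy. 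On the other hand, Proposition \ref{PropShadSing} gives the matching lower bound $\mu_{p}(U)\geq C(r)^{-1}\exp(-d(p,x))$. Hence $\mu_{p}(U)/\mu_{p}(B)$ is bounded below by a positive constant depending only on $r$, which serves as $C_{\mathcal{V}}$.

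For the second Vitali condition, fix $\eta\in\partial\tilde{S}$, choose a geodesic ray $[p,\eta]_{\tilde{S}}$, and consider the sequence $U_{n}:=sh_{p}(B([p,\eta](n),r))$ for $n\in\N$. Each $U_{n}$ contains $\eta$ since the ray itself passes through $[p,\eta](n)$, and by Proposition \ref{Propshadowsmetrsmall} it is contained in a $d_{p,\infty}$-ball of radius $C_{sh}(r)\xi^{-n}$, so $\diam(U_{n})\to 0$.

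None of the steps is substantively hard; the only real bookkeeping is matching the two exponential decay scales $\xi^{-d\cdot d(p,x)}$ on the Gromov-metric side and $\exp(-d(p,x))$ on the shadow-measure side, which reduces to the identity $d=e(S)/\log(\xi)$ combined with the paper's normalization that $e(S)=1$.
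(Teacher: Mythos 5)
Your proof is correct and follows exactly the route the paper takes: doubling from the two-sided ball estimate in Theorem \ref{thmhdimentro}, the first Vitali condition by combining the containment of Proposition \ref{Propshadowsmetrsmall} with the lower measure bound of Proposition \ref{PropShadSing}, and the second from the decay of shadow diameters along a ray. The paper's own proof is just a two-line citation of these same results, so your write-up simply supplies the bookkeeping it omits.
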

\begin{proof}
The fact that $\mu_p$ is doubling follows from Theorem \ref{thmhdimentro}. One concludes from Proposition \ref{Propshadowsmetrsmall} and Proposition \ref{PropShadSing}  that the set of shadows forms a Vitali covering.
\end{proof}
Finally we reformalize a statement from \cite{dankwart2011typical} in a weaker form.  
On a flat surface $S$ of volume entropy one fix a geodesic segment $c$ that starts and ends at singularities.   In the later context, $c$ is a multiple of a closed geodesic that passes through singularities. Fix a base  point $p$ on the universal covering $\pi:\tilde{S} \to S$. Denote by $\tilde{A}_c$  the set of boundary points to which tends a geodesic ray emanating from $p$  which contains at most finitely many lifts of $c$ as a subgeodesic segment. \\
Denote by $\tilde{A} $ the union of  $\tilde{A}_c$ for all such geodesic segments $c$.
\begin{Prop}\label{Propgengeod}
   $\tilde{A}$ is a zero set in with respect to the measure class $\mu_{\tilde{S}}$  
\end{Prop}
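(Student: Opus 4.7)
The plan is to reduce, via countability, to showing $\mu_{\tilde{S}}(\tilde A_c)=0$ for a single $c$, pass to a tail set $B_n$ at fixed time, and then contradict the Lebesgue differentiation theorem applied to the Vitali covering of shadows established in the Corollary after Proposition~\ref{PropShadSing}. The set of admissible $c$ is countable because $\Sigma$ is finite and every homotopy class of arcs between two points of $\Sigma$ has a unique geodesic representative, by the Proposition of Section~\ref{sectflatmetric}; hence $\tilde A=\bigcup_c \tilde A_c$ is countable and it suffices to show $\mu_{\tilde{S}}(\tilde A_c)=0$ for each $c$. Fixing $c$, I let
\[
 B_n := \bigl\{\eta \in \partial\tilde S \,\big|\, [p,\eta] \text{ contains no lift of } c \text{ as a subsegment at parameter } > n\bigr\},
\]
so that $\tilde A_c=\bigcup_n B_n$ and it suffices to prove $\mu_{\tilde{S}}(B_n)=0$.

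Suppose $\mu_{\tilde{S}}(B_n)>0$ for some $n$, and fix $r>\sup_{y\in S}d(y,\Sigma)$; then every point of $\tilde S$ lies within $r$ of a singularity. By the Lebesgue differentiation theorem, $B_n$ has a density point $\eta$ with respect to this Vitali covering. Since $[p,\eta]$ passes within $r$ of infinitely many singularities, I obtain singularities $\varsigma_m$ with $d(p,\varsigma_m)\to\infty$ and $\eta\in U_m:=sh_p(B(\varsigma_m,r))$; Proposition~\ref{Propshadowsmetrsmall} makes the $d_{p,\infty}$-diameters of $U_m$ tend to zero, so density forces $\mu_{\tilde{S}}(U_m\cap B_n)/\mu_{\tilde{S}}(U_m)\to 1$.

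The crux is the reverse uniform lower bound $\mu_{\tilde{S}}(U_m\setminus B_n)\geq\epsilon\,\mu_{\tilde{S}}(U_m)$ with $\epsilon>0$ independent of $m$, which directly contradicts density. For this I would exhibit, near each $\varsigma_m$, a lift $\tilde c_m$ of $c$ with start singularity $\tilde s_1\in B(\varsigma_m,r)$ and far endpoint $\tilde s_2$, arranged so that $\tilde s_1$ lies on the geodesic $[p,\tilde s_2]$. By CAT(0) uniqueness of geodesics, every ray $[p,\zeta]$ passing through $\tilde s_2$ first traverses $\tilde s_1$ and hence contains $\tilde c_m$ as a subsegment starting at parameter $d(p,\tilde s_1)>n$; thus $sh_p(\tilde s_2)\subseteq U_m\setminus B_n$. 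Combining the point-mass lower bound $\mu_{\tilde{S}}(sh_p(\tilde s_2))\geq C_2^{-1}\exp(-d(p,\tilde s_2))$ from the singularity estimate cited in the proof of Proposition~\ref{PropShadSing}, the upper bound $\mu_{\tilde{S}}(U_m)\leq C(r)\exp(-d(p,\varsigma_m))$ from the same proposition, and the triangle estimate $d(p,\tilde s_2)\leq d(p,\varsigma_m)+r+\ell(c)$ yields such an $\epsilon$ depending only on $r$ and $\ell(c)$.

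The main obstacle is producing such a ``straight-ahead'' lift $\tilde c_m$ for infinitely many $\varsigma_m$: the direction of a given lift of $c$ at $\tilde s_1$ is prescribed and need not form the two angles $\geq\pi$ required of a local geodesic with the incoming direction $[p,\tilde s_1]$. I would handle this by exploiting the cone angle $>2\pi$ at every singularity, which leaves an arc of admissible outgoing directions of positive width, together with the tree structure of generic rays from \cite{dankwart2011typical}: either the obvious lift at $\varsigma_m$ qualifies, or a deck-translate of the starting singularity of $c$ lying in a controlled neighborhood of $\varsigma_m$ does. Since the proposition is cast in the excerpt as a weakened form of a result in \cite{dankwart2011typical}, one may alternatively shortcut the entire argument by appealing directly to the stronger assertion proved there.
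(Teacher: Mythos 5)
Your overall architecture is sound and close in spirit to the paper's: both arguments reduce to a single $c$ by countability and then show that a definite proportion of every shadow along a generic ray escapes $\tilde A_c$. You implement this via density points and the Lebesgue differentiation theorem, whereas the paper runs a direct iteration: it defines $\Sigma_n$ as the singularities reached through $n$ singularities without crossing a lift of $c$, and shows $\mu_p(sh_p(\Sigma_{n+b}))\leq \lambda\,\mu_p(sh_p(\Sigma_n))$ for a fixed $\lambda<1$, so the measure decays geometrically. The two bookkeeping schemes are interchangeable; yours leans on the Vitali/Lebesgue machinery already set up for Proposition \ref{Propnodist}, the paper's is more elementary and also avoids needing shadows of balls (it works with shadows of single singularities, using the cited measure estimate for those).

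The genuine gap is exactly the step you flag as ``the main obstacle'': the existence, within bounded distance of any point on any geodesic from $p$, of a lift $\tilde c_m$ of $c$ whose initial singularity $\tilde s_1$ lies on $[p,\tilde s_2]$ for $\tilde s_2$ the far endpoint. This is the entire geometric content of the proposition, and your proposed repair does not close it. The cone angle being $>2\pi$ at $\tilde s_1$ guarantees a positive-width arc of outgoing directions that extend $[p,\tilde s_1]$ geodesically, but the outgoing direction of a lift of $c$ at its initial singularity is rigidly prescribed by $c$ itself; there is no reason the prescribed direction of ``the obvious lift at $\varsigma_m$'' falls in that arc, and finding ``a deck-translate of the starting singularity of $c$ in a controlled neighborhood'' with a compatible angle is a uniform recurrence statement about the compact surface, not a pointwise count of directions. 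The paper closes precisely this hole by importing \cite[Proposition 3.2]{Dankwart2011} as a black box: every geodesic $[p,\varsigma]$ extends, with additive length cost at most $C_l$, to a geodesic ending at a singularity and containing a lift of $c$. Your alternative of citing the stronger statement of \cite{dankwart2011typical} outright is legitimate (the paper itself presents the proposition as a weakened reformulation), but as written your self-contained version does not constitute a proof; with the citation to the extension result substituted for the cone-angle heuristic, the rest of your estimates (the ratio $\mu_{\tilde S}(sh_p(\tilde s_2))/\mu_{\tilde S}(U_m)\geq\epsilon$ via the exponential shadow bounds and $d(p,\tilde s_2)\leq d(p,\varsigma_m)+r+C_l$) do go through.
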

\begin{proof}
Since  $S$ contains only finitely many singularities there are only  countably many geodesic segments $c$ on $S$ that connect singularities. So it suffices to show, that $\tilde{A}_c$ is a zero set for each such geodesic segment $c$. Moreover we can assume that $\tilde{A}_c$ is the set of boundary points to which tends a lift of a ray which never passes through $c$. \\    
We first cite the main known results.
\begin{enumerate}
 \item It was shown in \cite[Proposition  4.7]{dankwart2011typical} that the set of boundary points $\eta$ so that $[p,\eta]$ passes through only finitely many singularities is of measure zero. 
\item By \cite[Proposition 3.2]{Dankwart2011} there are constants $C_l,b>0$, so that each geodesic segment from $p$ to a singularity $\varsigma$ can be extended to a geodesic segment of length less than $d(p,\varsigma)+C_l$ which ends at a singularity $\varsigma'$, passes through $b$ singularities and contains a lift of $c$.
\item It  was shown in \cite[Proposition  4.7]{dankwart2011typical} that  there is a constant $C_s>0$ that the $\mu_p$ measure of a shadow of a singularity $\varsigma$ equals $exp(-d(p,\varsigma))$ up to  the multiplicative constant $C_s$.  
\end{enumerate}
For $n\in \N$ define $\Sigma_n$ the set of all singularities $\varsigma$ so that $[p,\varsigma]$ passes trough $n$ singularities but does not pass through a lift of $c$.\\
Define $\Phi_k:\Sigma_{n+k} \to \Sigma_{n}$ , so that $\Phi_k(\varsigma)$ is the $n$-th singularity in $[p,\varsigma]$.  
Also define $\Psi:\Sigma_{n} \to \Sigma_{n+b}$ , so that $\Psi(\varsigma)$ is the endpoint of the extended geodesic segment as in ii) that contains a lift of $c$.\\ 
By i) the shadow of $\Sigma_n$ covers $\tilde{A}_c$ up to measure zero.  For each singularity $\varsigma_n \in \Sigma_n$ one concludes,
 that the shadow of  $\Phi_b^{-1}(\varsigma_n)$ form a subset of the shadow of $\varsigma_n$ whose complement contains the shadow of $\Psi(\varsigma_n)$. By ii) the distance  of $\varsigma_n$ and $\Psi(\varsigma_n)$ is at most $C_l$. By using the estimate of iii) there is a constant $\lambda<1$ so that 
$$\mu_p (sh_p(\varsigma_n ))- \mu_p (sh_p(\Psi(\varsigma) )) \leq \lambda \mu_p (sh_p(\varsigma_n ))$$
So the measure of the shadow of  $\Sigma_{n+b}$ is at most $\lambda$ times the measure of the shadow of  $\Sigma_{n}$.
Therefore letting $n$ tend to infinity  the measure of $sh_p(\Sigma_{n})$ tends to zero and so $\tilde{A}_c$ is a zero set.     
\end{proof}
 
\section{\Teich space of flat surfaces and the proof of the main Theorem}\label{sectmainthem}
\cite[Theorem 2]{Duchin2010} showed that the marked length spectrum uniquely determines a point in the \Teich space of flat surfaces of area one. The following slight generalization is word by word as the original proof.
\begin{Thm} \label{ThmLengthSpectra}
Two marked flat surfaces $(S,f),(T,g)$ of finite area determine the same point in \Teich space of flat surfaces, if and only if the marked length spectrum  of $(S,f)$ equals the marked length spectrum of $(T,g)$. 
\end{Thm}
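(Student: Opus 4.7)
The ``only if'' direction is immediate: if $g\circ f^{-1}$ is isotopic to an isometry then corresponding free homotopy classes have geodesic representatives of equal length. For the nontrivial direction, the plan is to follow the geodesic-currents argument of Duchin--Leininger--Rafi, which adapts Bonahon's hyperbolic framework to the singular flat setting.

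First I would associate to each marked flat surface $(S,f)$ a geodesic current $L_S$ on $\tilde X$: a $\Gamma$-invariant Radon measure on $\partial\tilde X\times\partial\tilde X-\triangle$ built from the horizontal and vertical transverse measured foliations of the half-translation structure. The defining property is the length-intersection identity $i(L_S,\gamma)=\ell_S(f(\gamma))$ for every free homotopy class of closed curves $\gamma$ on $X$. The hypothesis on the marked length spectra then reads $i(L_S,\gamma)=i(L_T,\gamma)$ for every $\gamma$.

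Next I would invoke Bonahon's density theorem: weighted closed curves are dense in the space of geodesic currents, and the intersection pairing is continuous. Hence equality on closed curves promotes to equality of the functionals $i(L_S,\cdot)$ and $i(L_T,\cdot)$ on the whole space of currents. Specialising to the currents themselves gives $i(L_S,L_S)=i(L_S,L_T)=i(L_T,L_T)$, and since $i(L_S,L_S)$ is proportional to the area of $S$ this already recovers equality of areas; more importantly it allows one to conclude $L_S=L_T$ from rigidity properties of the intersection form.

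The principal—and most delicate—step is the rigidity of the assignment $(S,f)\mapsto L_S$: one must reconstruct the singular flat metric from its current. Concretely, the horizontal and vertical measured foliations of the half-translation structure appear as extremal components in the disintegration of $L_S$, and a pair of transverse measured foliations in general position determines the half-translation structure up to isotopy. Establishing that the equality $L_S=L_T$ forces agreement of these two foliations, and therefore of the flat metrics themselves in the given marking class, is the core of the argument; it is here that the singular flat setting genuinely departs from the smooth hyperbolic one of Otal and Croke, and I expect this reconstruction step to be the main obstacle.
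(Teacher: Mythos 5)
Your outline is exactly the route the paper takes: it proves nothing itself, but cites Duchin--Leininger--Rafi (\cite[Theorem 2]{Duchin2010}) and notes the finite-area generalization is verbatim the same, and that cited proof is precisely the Liouville-current argument you sketch (length-intersection identity, Bonahon density, recovery of the half-translation structure from its current). The reconstruction step you flag as delicate is indeed the core of the cited proof, but since the paper defers it entirely to the reference, your proposal matches the paper's approach.
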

Suppose that $S,T$ are flat surfaces of volume entropy one and suppose that $f:\tilde{S} \to \tilde{T}$ is a $L$-quasi-isometry between the universal coverings. Fix a base point $p \in \tilde{S}$.  
 \begin{Prop}\label{Propnodist} Let $f:\tilde{S}\to \tilde{T}$ be an $L$-quasi-isometry so that $\mu_{\tilde{S},p}$ is in the measure class of $f_*^{-1}\mu_{\tilde{T},f(p)}$. Then for almost all points $\eta\in \partial \tilde{S}$ there is a constant $C:=C(\eta,h)>0$ that 
 $$\lvert d_{\tilde{S}}([p,\eta]_{\tilde{S}}(s),[p,\eta]_{\tilde{S}}(t))-d_{\tilde{T}}(f([p,\eta]_{\tilde{S}}(s)),f([p,\eta]_{\tilde{S}}(t)) \rvert \leq C, ~ \forall s,t.$$ 
 \end{Prop}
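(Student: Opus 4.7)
The plan is to apply the Lebesgue differentiation theorem to the Radon--Nikodym derivative $\phi := d(f_*^{-1}\mu_{\tilde{T},f(p)})/d\mu_{\tilde{S},p}$, using the Vitali covering by shadows of balls of a fixed sufficiently large radius $R$, as provided by the corollary following Proposition \ref{PropShadSing}. Since the two measures lie in the same class, $\phi$ is finite and strictly positive at $\mu_{\tilde{S},p}$-almost every $\eta$. For such a generic $\eta$, writing $r := [p,\eta]_{\tilde{S}}$ and $U_t := sh_p(B(r(t),R))$, the sets $U_t$ contain $\eta$ and have Gromov diameter shrinking to zero by Proposition \ref{Propshadowsmetrsmall}, so Lebesgue differentiation will give
\[
\lim_{t\to\infty}\frac{\mu_{\tilde{T},f(p)}(f(U_t))}{\mu_{\tilde{S},p}(U_t)}=\phi(\eta)\in(0,\infty).
\]

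Next I translate this ratio into a distance bound. The denominator satisfies $\mu_{\tilde{S},p}(U_t)\asymp\exp(-t)$ by Proposition \ref{PropShadSing} on $\tilde{S}$. For the numerator, Lemma \ref{Lemnestedshadows} sandwiches $f(U_t)$ between $sh_{f(p)}(B(f(r(t)),r_s))$ and $sh_{f(p)}(B(f(r(t)),r_b))$; choosing $R$ large enough so that both $R$ exceeds the in-radius of $S$ and $r_s$ exceeds the in-radius of $T$, Proposition \ref{PropShadSing} applied in $\tilde{T}$ to both sandwiching shadows gives $\mu_{\tilde{T},f(p)}(f(U_t))\asymp\exp(-d(f(p),f(r(t))))$. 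Combining, the quantity $\exp(d(p,r(t))-d(f(p),f(r(t))))$ stays in a bounded interval for all $t$ beyond some threshold $t_0(\eta)$, yielding Condition 2* of the introduction.

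The remaining step is to upgrade Condition 2* to the proposition's bound for arbitrary pairs $s,t$. For $s,t\geq t_0$, I project $f(r(s))$ and $f(r(t))$ to points $q_s,q_t$ on the geodesic $[f(p),f(\eta)]_{\tilde{T}}$; by Lemma \ref{lemboundHausdist} each projection is within $H(L,\delta)$ of its preimage, and Condition 2* pins down $d(f(p),q_s)=s+O(C+H)$ and $d(f(p),q_t)=t+O(C+H)$. Since $q_s,q_t$ lie on a common geodesic emanating from $f(p)$, this forces $d(f(r(s)),f(r(t)))=|t-s|+O(C+H)=d(r(s),r(t))+O(C+H)$. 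For $s$ or $t$ below $t_0$ the quasi-isometry inequality alone bounds the difference by a constant depending only on $t_0$ and $L$, which can be absorbed into the final $C(\eta,f)$.

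The main obstacle, and the place where most of the technical work hides, is the numerator estimate: one must choose $R$ so that after passing through Lemma \ref{Lemnestedshadows} the sandwiching radii $r_s,r_b$ still satisfy the hypotheses of Proposition \ref{PropShadSing} in $\tilde{T}$ (in particular $r_s$ must exceed the in-radius of $T$), and one must verify that the two sandwiching estimates have matching exponential order so that together they pin down $\mu_{\tilde{T},f(p)}(f(U_t))$ up to a multiplicative constant. Once these bookkeeping points are handled, the combination of Lebesgue differentiation with the shadow lemma drives the argument cleanly.
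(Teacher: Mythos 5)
Your proposal follows essentially the same route as the paper's proof: differentiate the Radon--Nikodym derivative along the Vitali covering by shadows at a generic Lebesgue point, sandwich $f(U_t)$ between shadows of radii $r_s$ and $r_b$ via Lemma \ref{Lemnestedshadows}, apply the shadow estimate of Proposition \ref{PropShadSing} in both $\tilde{S}$ and $\tilde{T}$ to deduce Condition 2*, and then upgrade to arbitrary pairs $s,t$ using that $f([p,\eta]_{\tilde{S}})$ is a quasi-geodesic fellow-travelling $[f(p),f(\eta)]_{\tilde{T}}$. The one slip is your last sentence: when exactly one of $s,t$ lies below $t_0$, the quasi-isometry inequality alone does \emph{not} bound $\lvert d(f(r(s)),f(r(t)))-|s-t|\rvert$ by a constant, since $|s-t|$ is unbounded and the inequality only gives multiplicative control with additive error growing like $L|s-t|$. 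The fix is immediate and is what the paper does: first extend Condition 2* to all $t\geq 0$ (for $t\leq t_0$ both $d(p,r(t))\leq t_0$ and $d(f(p),f(r(t)))\leq (1+L)t_0+L$ are bounded, so the additive discrepancy is trivially at most $\max\{(1+L)t_0+L,\,C_4\}$), and only then run the projection/concatenation argument for an arbitrary pair $s\leq t$.
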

\begin{proof} 
 Define the Radon Nikodym derivative  $h:=\frac{d \left(f_*^{-1}\mu_{\tilde{T},f(p)}\right)}{d\mu_{\tilde{S},p}}$ and denote by $\mathcal{V}$ the  Vitali set of shadows  of balls on $\partial \tilde{S}$ with respect to the base point $f(p)$.  
It will be shown that each Lebesgue point $\eta\in \partial \tilde{S}$ for $h, \mathcal{V}$ with $0<h(\eta)<\infty$  has this property. Choose some $C_1:=C_1(\eta)$ that $C_1^{-1}<h(\eta)<C_1$. 
As $f$ is an $L$-quasi-isometry and and $\tilde{S},\tilde{T}$ are $\delta$-hyperbolic,  one can choose $r,r_b>r_s>0$ as in Lemma \ref{Lemnestedshadows}, so that 
$$r>\sup\limits_{x\in S}(d(x,\Sigma),~ r_s>\sup\limits_{x\in T}(d(x,\Sigma).$$  
  We define first the shadows of balls which are centered on the rays.  
\begin{eqnarray*}
U_{\tilde{S}}(t) &:=&sh_{p,\tilde{S}}(B([p,\eta](t),r))\\
U_{\tilde{T},s}(t) &:=&sh_{p,\tilde{S}}(B(f([p,\eta](t)),r_s))\\
U_{\tilde{T},b}(t) &:=&sh_{p,\tilde{T}}(B(f([p,\eta](t)),r_b))
\end{eqnarray*}
Note that by construction they nested in the following sense.
$$U_{\tilde{T},s}(t)\subset f(U_{\tilde{S}}(t))\subset U_{\tilde{T},b}(t)$$
Observe that the measure of $U_{\tilde{T},s}(t)$ and the measure of $U_{\tilde{T},b}(t)$ differ at most by a multiplicative factor $C_2$.\\
 $\eta$ is a Lebesgue point and the set of shadows forms a Vitali covering. If $t$ is larger than some threshold $t_0$, the $\mu_{\tilde{S},p}$-measure of the shadow $U_{\tilde{S}}(t)$ along the ray $[p,\eta]$ and the $\mu_{\tilde{T},f(p)}$-measure of its image differ at most by the factor $C_1$ as the quotient 
   $$\frac{\mu_{\tilde{T},f(p)}(f(U_{S}(t)))}{\mu_{\tilde{S},p}(U_{S}(t))} =\frac {f_*^{-1}\mu_{\tilde{T},f(p)}(U_{S}(t))}{\mu_{\tilde{S},p}(U_{S}(t))}=\frac {\int_{U_{\tilde{S}}(t)}h d \mu_{\tilde{S},p} }{\mu_{\tilde{S},p}(U_{S}(t))}$$ converges to $h(\eta)$ when  $t$ tends to infinity. \\
So, observe that the $\mu_{\tilde{S},p}$-measure of $U_{S}(t)$ and $\mu_{\tilde{T},p}(U_{T,s}(t))$ differ at most by the multiplicative factor $C_2C_1$ for all $t>t_0$. \\   
   Suppose first that $s=0,~t>t_0$  and define $y:=[p,\eta](t)$.\\ 
Recall that the $\mu_{p,\tilde{S}}$ measure of shadows ball of fixed radius decrease exponentially in their distance to the base point $p$ up to a factor $C_3$.  Up to enlarging $C_3$ the same holds for shadows of balls in $\tilde{T}$.  So   
\begin{eqnarray*}
C_3^{-1} \exp(-d_{\tilde{S}}(p,y))\leq  & \mu_{\tilde{S},p}(U_{S}(t))& \leq C_3 \exp(-(d_{\tilde{S}}(p,y))\\
C_3^{-1} \exp(-d_{\tilde{T}}(f(p),f(y)))\leq  & \mu_{\tilde{T},p}(U_{\tilde{T},s}(t))& \leq C_3\exp(-d_{\tilde{T}}(f(p),f(y)))
\end{eqnarray*} 
One concludes that the distances $d_{\tilde{T}}(f(p),f(y))$ and $d_{\tilde{S}}(p,y)$ differ at most by an additive constant $C_4$.\\      
 If we skip the restriction that $t> t_0$  recall that $f$ is an $L$-quasi-isometry. 
So, for $C_5:=\max\{(1+L)t_0+L, C_4\}$ and $y:=[p,\eta](t)$ one concludes 
$$|d_{\tilde{T}}(f(p),f(y)) - d_{\tilde{S}}(p,y)|\leq  C_5.$$
For $0\leq s\leq t$ denote by $x:=[p,\eta](s),~y:=[p,\eta](t)$.\\
As  $f([p,y]_{\tilde{S}})$ is an $L$-quasi-geodesic, $f(x)$ is in the  $H(L,\delta)$-neighborhood of $[f(p),f(y)]_{\tilde{T}}$. 
So the length of the concatenated geodesics segments $[f(p),f(x)]$ and $[f(x),f(y)]$ and the distance of $f(p)$ and $f(y)$ differ at most by $2H(L,\delta)$. So $d(f(x),f(y))$ and $d(f(p),f(y))-d(f(p),f(x))$ differ at most by the additive constant $2H(L,\delta)$. \\  
Additionally  $d(x,y)$ is precisely $d(p,y)-d(p,x)$. As $d(f(p),f(*))$ and $d(p,*),~*=x,y$ differ at most by an additive error $C_5$ the claim is shown for $C:=2(C_5+H(\delta,L))$.
\end{proof}

\begin{Prop}\label{PropPSmeasdetlength}
Suppose $f:S \to T$ is a  homeomorphism between flat surfaces of volume entropy one and define $f: \tilde{S} \to \tilde{T}$ the lift to the universal coverings.\\   
 If the measure classes $f^{-1}_*\mu_{T}$ and $\mu_{S}$ of the Hausdorff measures on $\partial\tilde{S},\partial\tilde{T}$ are  equal then
the length of each free homotopy class of closed curves $\alpha$ on $S$  satisfies 
$$l_S(\alpha)=l_{T}(f(\alpha)).$$
\end{Prop}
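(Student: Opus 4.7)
The plan is to combine Propositions~\ref{Propnodist} and~\ref{Propgengeod} with a translation length argument along one carefully chosen geodesic ray. First lift $f$ to an equivariant map $\tilde f \colon \tilde S \to \tilde T$ with respect to the induced isomorphism $f_*$ of fundamental groups; as noted in the introduction, $\tilde f$ is an $L$-quasi-isometry for some $L > 0$. Fix a base point $p \in \tilde S$ and a free homotopy class $\alpha$ on $S$, and let $\gamma \in \Gamma$ represent $\alpha$. For each $n \in \N$, the existence result for flat-surface geodesic representatives (Section~\ref{sectflatmetric}) provides a length-minimizing closed geodesic representative of $\alpha^n$ passing through at least one singularity; lifting one period to $\tilde S$ yields a geodesic segment $c_n$ of length $n\,l_S(\alpha)$ whose endpoints $\tilde x_n$ and $\tilde y_n = \gamma^n \tilde x_n$ are both lifts of singularities.

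Next I localize at one boundary point that is generic in two independent senses. By Proposition~\ref{Propgengeod}, the set $\tilde A$ of boundary points $\eta$ for which $[p,\eta]_{\tilde S}$ meets only finitely many lifts of some geodesic segment between singularities is $\mu_{\tilde S}$-null. By Proposition~\ref{Propnodist}, the set $B$ of $\eta$ along whose ray $\tilde f$ has bounded additive distortion has full $\mu_{\tilde S}$-measure. Hence $B \setminus \tilde A$ has full measure, and I pick any $\eta$ in it. For this single $\eta$ and every $n$, the ray $[p,\eta]_{\tilde S}$ contains a subsegment that is an isometric lift of $c_n$; labelling its endpoints as $\tilde x_n$ and $\tilde y_n = \gamma^n \tilde x_n$ (for an appropriate conjugate of $\gamma$ preserving the axis through that lift), there is a constant $C = C(\eta) > 0$ with
$$\bigl| d_{\tilde T}(\tilde f(\tilde x_n), \tilde f(\tilde y_n)) - d_{\tilde S}(\tilde x_n, \tilde y_n) \bigr| \leq C.$$

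Finally a translation length estimate closes the argument. By equivariance, $\tilde f(\tilde y_n) = f_*(\gamma)^n \tilde f(\tilde x_n)$, and the displacement of any deck transformation is bounded below by its translation length, so
$$d_{\tilde T}(\tilde f(\tilde x_n), \tilde f(\tilde y_n)) \geq n \, l_T(f(\alpha)).$$
Combined with $d_{\tilde S}(\tilde x_n, \tilde y_n) = n\,l_S(\alpha)$ and the bounded distortion inequality, this yields $n\,l_T(f(\alpha)) \leq n\,l_S(\alpha) + C$. Dividing by $n$ and letting $n \to \infty$ gives $l_T(f(\alpha)) \leq l_S(\alpha)$, and the reverse inequality follows from the same argument applied to $\tilde f^{-1}$, since the hypothesis of equal measure classes is symmetric in $S$ and $T$. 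I expect the only delicate point to be the simultaneous choice of one generic $\eta$ serving both the bounded distortion condition along the whole ray and the infinite recurrence of every $c_n$; this is precisely why one intersects two full-measure sets before extracting a single witness.
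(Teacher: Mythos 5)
Your proposal is correct and follows essentially the same route as the paper: pick a boundary point generic for both Proposition~\ref{Propgengeod} and Proposition~\ref{Propnodist}, extract a lifted geodesic representative of $\alpha^n$ as a subsegment of the ray, bound the distance between the images of its singular endpoints from below by $n\,l_T(f(\alpha))$, and let the additive constant become negligible. The only cosmetic differences are that the paper argues by contradiction with a single fixed $n\geq C/\epsilon$ rather than dividing by $n$ and letting $n\to\infty$, and phrases the lower bound via the free homotopy class of the projected segment rather than via translation lengths; both versions handle the reverse inequality by the same symmetry.
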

\begin{proof}
Assume on the contrary that up to exchanging the roles of $S$ and $T$
$$l_S(\alpha)<l_{T}(f(\alpha)) - \epsilon$$ 
for some $\epsilon>0$ and for some free homotopy class $\alpha$. 
Fix a base point $p$ on $\tilde{S}$.  By Proposition \ref{Propgengeod} there is a Lebesgue point $\eta \in \partial \tilde{S}$ as in Proposition \ref{Propnodist} so that the projection of $[p,\eta]_{\tilde{S}}$  to $S$ passes through all geodesic segments that connect singularities. Define $C$ as in Proposition \ref{Propnodist} and fix some multiple $\alpha^n$ of $\alpha$ so that $n\geq C/\epsilon$.
Denote by $c$ a geodesic representative of the free homotopy class of  $\alpha^n$ that passes through singularities.  Fix a lift $\tilde{c}$ of $c$ with singularities $\varsigma_1, \varsigma_2$ as endpoints that is a subgeodesic segment of $[p,\eta]_{\tilde{S}}$.
Observe that $d_{\tilde{S}}(\varsigma_1 ,\varsigma_2)=nl_S (\alpha)$. 
On the other hand the projection of the geodesic segment $[f(\varsigma_1),f(\varsigma_2)]_{\tilde{T}}$ is in the free homotopy class of $f(\alpha^n)$ so $d_{\tilde{T}}(f(\varsigma_1),f(\varsigma_2))$ is at least $nl_{T}(f(\alpha))>nl_{S}(\alpha)+C $. 
This contradicts the fact that $d_{\tilde{T}}(f(\varsigma_1),f(\varsigma_2))$ and $d_{\tilde{S}}(\varsigma_1,\varsigma_2)$ differ less than $C$. 
\end{proof}
  \begin{proof}[Proof of Theorem \ref {Thmmain}]
Let $(S,f),(T,g) \in \mathcal{T} (X)$ be points in the \Teich space of flat surfaces  so that the push forward of the measure class $\mu_{\tilde{S}}$ under a lift of an representative in  $g\circ f^{-1}$ equals $\mu_{\tilde{T}}$. Scale $S,T$ to volume entropy one. Then by   
 Proposition \ref{PropPSmeasdetlength} the marked length spectra of $(S,f),(T,g)$ are equal up to a positive scalar. By  Theorem \ref{ThmLengthSpectra} the scaled marked flat surface $(S,f),(T,g)$ determine the same point in $\mathcal{T} (X)$. 
\end{proof}
\bibliographystyle{alpha}	
 \bibliography{datab}
\noindent
Klaus Dankwart\\
Vorgebirgsstrasse 80,\\
53119 BONN, GERMANY\\
e-mail: kdankwart@googlemail.com

\end{document}